\newcommand{\func}[1]{\operatorname{#1}}
\newtheorem{theorem}{Theorem}[section]
\newtheorem{corollary}[theorem]{Corollary}
\newtheorem{lemma}[theorem]{Lemma}
\newtheorem{proposition}[theorem]{Proposition}
\newtheorem{remark}[theorem]{Remark}
\newtheorem{example}[theorem]{Example}
\numberwithin{equation}{section}
\begin{document}
\title{Riemannian Flows and Adiabatic Limits}
\author[G.~Habib]{Georges Habib}
\address{Lebanese University \\
Faculty of Sciences II \\
Department of Mathematics\\
P.O. Box 90656 Fanar-Matn \\
Lebanon}
\email[G.~Habib]{ghabib@ul.edu.lb}
\author[K.~Richardson]{Ken Richardson}
\address{Department of Mathematics \\
Texas Christian University \\
Fort Worth, Texas 76129, USA}
\email[K.~Richardson]{k.richardson@tcu.edu}
\subjclass[2010]{53C12; 53C27; 58J50; 53C21}
\keywords{Riemannian foliation, basic Dirac operator, collapsing, spectrum}
\thanks{This work was supported by a grant from the Simons Foundation (Grant
Number 245818 to Ken Richardson), the Alexander von Humboldt Foundation, Institut f\"ur Mathematik der
Universit\"at Potsdam, and Centro Internazionale per la Ricerca Matematica
(CIRM)}

\begin{abstract}
We show the convergence properties of the eigenvalues of the Dirac operator
on a spin manifold with a Riemannian flow when the metric is collapsed along
the flow.
\end{abstract}

\maketitle
\tableofcontents

\section{Introduction}

Many researchers have studied the spectrum of the Laplacian and 
Dirac-type operators on families of manifolds where the metric is collapsed. 
We point out in particular the references
\cite{Col}, \cite{Fuk}, \cite{GLP}, where the behavior of the spectrum of
Laplacians on Riemannian submersions are noted under collapse of the
fiber metrics.
In \cite{MaMe}, R. R. Mazzeo and R. B. Melrose related the properties of
the Laplace eigenvalues under adiabatic limits in a Riemannian fiber 
bundle to the Leray spectral sequence, 
and J. A. \'{A}lvarez-L\'{o}pez and Y. Kordyukov extended this analysis in \cite{ALK}
to the more general case of Riemannian foliations; see 
\cite{KY} for an exposition and further references.
Adiabatic limits of the eta invariants of Dirac operators have also been considered, 
as in \cite{Wit}, \cite{BiC}, and \cite{D1}.

In \cite{Am-Ba}, B. Ammann and C. B\"{a}r examined the eigenvalues of the
Dirac operator of circle bundles over a closed Riemannian manifold $M\diagup
S^{1}$, such that the bundle projection is a Riemannian submersion. They
found that as the metric is changed such that the lengths of the circles
collapse to zero, the eigenvalues separate into two categories: those that
converge to the eigenvalues of the base (quotient) manifold which correspond
to the \textit{projectable spinors} --- for which the Lie derivative is zero
in the direction of the fibers --- and those eigenvalues that go to
infinity, corresponding to \textit{non-projectable spinors}. The main idea
is to decompose the Lie derivative of any spinor field on $M$ into
finite-dimensional eigenspaces $V_{k}$ ($k\in \mathbb{Z}$), and such a
decomposition is preserved by the Dirac operator. This comes from the
representation of the Lie group $S^{1}$ on the spinor bundle on $M$. In a
second step, they decompose the Dirac operator of the whole manifold $M$
into a horizontal and vertical Dirac operator and a zero$^{\text{th}}$ order
term. It turns out that the horizontal Dirac operator commutes with the Lie
derivative, while the vertical part anticommutes. This allows the
researchers to compute explicitly the eigenvalues of the Dirac operator on $M
$ on each eigenspace $V_{k}$ in terms of $k$. Here the zero$^{\text{th}}$
order term does not contribute in the adiabatic limit, since it is a bounded
operator and tends to zero with the length of the fibers. 
In \cite{Am}, B. Ammann extended the result above to the case where
the circles form a more general Riemannian submersion with projectable
spin structures over a base manifold.
Also, in \cite{Pfa}, F. Pf\"affle studied the degeneration of Dirac eigenvalues in a 
sequence of compact spin hyperbolic manifolds in the case the limit has
discrete Dirac spectrum.
We also mention
the work of J. Lott in \cite{Lott}, where the limit of a general
Dirac-type operator is studied under a collapse for which the diameter and
sectional curvature are bounded. In this case, the spectrum of the Dirac
operator converges to the spectrum of a limiting first order operator. 

In this paper, we consider a particular case of foliations, namely \textit{%
Riemannian flows}. On a Riemannian manifold $(M,g),$ a Riemannian flow is a
foliation of $1$-dimensional leaves given by the integral curves of a unit
vector field $\xi $ such that $g$ is a bundle-like metric. This means the
Lie derivative of the transverse metric in the direction of $\xi $ vanishes.
Examples of such flows are those given by Killing vector fields and Sasakian
manifolds. Those are called taut (meaning the mean curvature form is exact),
but examples of nontaut Riemannian flows exist (see, for example, \cite{Car}%
).\newline

We now take the adiabatic limit of the Riemannian flow, and in our 
situation it is often not the case that the limit is a manifold. This means we
consider the bundle-like metric 
\begin{equation*}
g_{f}=f^{2}\xi ^{\ast }\otimes \xi ^{\ast }+g_{\xi ^{\bot }},
\end{equation*}%
where $f$ is a positive basic function on $M$, and we prove that the
eigenvalues of the Dirac operator on $(M,g_{f})$ corresponding to basic
sections tend to those of the basic Dirac operator $D_{b}$, which is morally
the Dirac operator of the local quotients in the foliation charts; see the
next section for details. We point out that our case does not require the
leaves to be circles, unlike the situation in \cite{Am-Ba} or in \cite{Am}. Also, we prove
that when the flow is taut, the eigenvalues from the $L^{2}$-orthogonal
complement of the space of basic sections of the spinor bundle go to $\pm
\infty $. The main difference between our case and the one in \cite{Am-Ba}
is that there is not necessarily a circle action on the manifold $M$, which
mainly means that the $L^{2}$-decomposition of the Lie derivative in the
direction of the flow cannot carry over. Moreover, the leafwise Dirac
operator could fail to have discrete spectrum. We also mention the work of
P. Jammes in \cite{Jam}, where he considered adiabatic limits of Riemannian flows,
similar to our setting, and examined their effect on the eigenvalues of the
Laplacian.

In Section \ref{PrelimSection}, we provide preliminary details on spin
Riemannian flows and in particular define the leafwise Dirac operator $D_{%
\mathcal{F}}$ and the symmetric transversal Dirac operator $D_{Q}$ ($Q=\xi
^{\bot }=N\mathcal{F}$).\ In Lemma \ref{CommutatorLemma}, we express the
anticommutator of these operators in terms of the mean curvature. We show
the operator $D_{\mathcal{F}}$ is symmetric, and its kernel is the $L^{2}$%
-closure of the space of basic sections (see Proposition \ref{DFSymmProp}).
In Corollary \ref{minimalCaseDFeigenvSpectrumCor}, we prove that when the
flow is minimal, the spectrum of $D_{\mathcal{F}}$ contains a countable
number of real eigenvalues, and there exists a complete orthonormal basis of
the $L^{2}$ spinors consisting of smooth eigensections of $D_{\mathcal{F}}$ .

Our main result is Theorem \ref{CollapsingThm}, where we show that the
eigenvalues behave as stated above in the adiabatic limit. In Section \ref%
{examplesSection}, we exhibit examples which show interesting behavior of
the operators $D_{\mathcal{F}}$ and $D_{Q}$. In these examples, which are
not fibrations, the operator $D_{\mathcal{F}}$ does not have discrete
spectrum, but nonetheless the conclusion of the main theorem is made clear.

\section{Dirac operators on Riemannian flows\label{PrelimSection}}

Let $\left( M,g\right) $ be a closed $\left( n+1\right) $-dimensional
Riemannian manifold, endowed with an oriented Riemannian flow. This means
that there exists a unit vector field $\xi $ on $M$ such that the Lie
derivative of the transverse metric vanishes: $\mathcal{L}_{\xi }\left(
\left. g\right\vert _{\xi ^{\bot }}\right) =0$ (see \cite{Rein}, \cite{Car}, 
\cite{Tond}). Suppose in addition that $M$ is spin, and let $D_{M}$ be the
Dirac operator associated to the spin structure acting on sections of the
spinor bundle $\Sigma M$, which has a given hermitian metric and metric spin
connection.

We wish to construct the basic Dirac operator associated to the induced spin
structure on the normal bundle. Since $TM=\mathbb{R}\xi \oplus \xi ^{\bot }$%
, the pullback of the spin structure on $M$ induces a spin structure on the
normal bundle $Q=\xi ^{\bot }=N\mathcal{F}$. In this case, the spinor bundle 
$\Sigma M$ is canonically identified with the spinor bundle $\Sigma Q$ of $Q$%
, for $n$ even, and with the direct sum $\Sigma Q\oplus \Sigma Q$ for $n$
odd. The metric on $\Sigma M$ induces a metric on $\Sigma Q$. When $n$ is
even, then $i\xi \cdot $ is taken to be the chirality operator, as $\left(
i\xi \cdot \right) ^{2}=\mathrm{id}$, and we let $\left( \Sigma Q\right)
^{\pm }$ be the eigenspaces associated to the $\pm 1$ eigenvalues, with
Clifford multiplication $\cdot _{Q}$ defined by $Z\cdot _{Q}\varphi =Z\cdot
_{M}\varphi $ for $Z\in \Gamma \left( Q\right) $, $\varphi \in \Gamma \left(
\Sigma Q\right) $. When $n$ is odd, the Clifford multiplications $\cdot _{M}$
on $\Sigma M$ and $\cdot _{Q}$ on $\Sigma Q:=\Sigma M^{+}$ are related by $%
Z\cdot _{Q}\varphi =Z\cdot _{M}\xi \cdot _{M}\varphi $ (as in \cite{Baer}).
Therefore, by using the above identification, the spinor connections $\nabla
^{\Sigma M}$ and $\nabla ^{\Sigma Q}$ are related by the following relations
(see \cite[formula 4.8]{HabThesis}). For all $Z\in \Gamma \left( Q\right) $, 
\begin{eqnarray}
\nabla _{\xi }^{\Sigma M}\varphi  &=&\nabla _{\xi }^{\Sigma Q}\varphi +\frac{%
1}{2}\Omega \cdot _{M}\varphi +\frac{1}{2}\xi \cdot _{M}\kappa \cdot
_{M}\varphi ,  \notag \\
\nabla _{Z}^{\Sigma M}\varphi  &=&\nabla _{Z}^{\Sigma Q}\varphi +\frac{1}{2}%
\xi \cdot _{M}\left( \nabla _{Z}^{M}\xi \right) \cdot _{M}\varphi ,
\label{covDerviEqs}
\end{eqnarray}%
where the Euler form $\Omega $ is the $2$-form given for all $Y,Z\in \Gamma
\left( Q\right) $ by $\Omega \left( Y,Z\right) =g\left( \nabla _{Y}^{M}\xi
,Z\right) $ and $\kappa ^{\#}=\nabla _{\xi }^{M}\xi \in \Gamma \left(
Q\right) $ is the mean curvature vector field of the flow. The one-form $%
\kappa $ is also identified with the corresponding Clifford algebra element.
We identify $\Omega $ with the associated element of the Clifford algebra by 
$\Omega =\frac{1}{2}\sum_{j=1}^{n}e^{j}\wedge \left( \nabla _{e_{j}}^{M}\xi
\right) ^{\flat }=\frac{1}{2}\sum_{j=1}^{n}e_{j}\cdot _{M}\left( \nabla
_{e_{j}}^{M}\xi \right) \cdot _{M}$ where here and in the following $\left\{
e_{j}\right\} _{j=1}^{n}$ is a local orthonormal frame of $\Gamma \left(
Q\right) $.

\begin{lemma}
\label{curvatureVanishingLemma}(in \cite{HabThesis}) If $K\left( X,Y\right)
=X\cdot Y\cdot \left( \nabla _{X}^{\Sigma Q}\nabla _{Y}^{\Sigma Q}-\nabla
_{Y}^{\Sigma Q}\nabla _{X}^{\Sigma Q}+\nabla _{\left[ X_{i},Y\right]
}^{\Sigma Q}\right) $ is the Clifford curvature of $\Sigma Q$, then $K\left(
X,Y\right) =0$ if $X=\xi $.
\end{lemma}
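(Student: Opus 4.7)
The plan is to reduce the statement to the vanishing of the transverse Riemannian curvature $R^{Q}(\xi,Y)$ as an endomorphism of $Q$, and then invoke the spin-lift formula to conclude $K(\xi,Y)=0$. Recall that the transverse Levi-Civita connection on $Q$ is characterized by $\nabla_{W}^{Q}Z=\pi(\nabla_{W}^{M}Z)$ for $W\in\Gamma(Q)$ and $\nabla_{\xi}^{Q}Z=\pi[\xi,Z]$, where $\pi\colon TM\to Q$ is the orthogonal projection. In any foliation chart the local basic sections of $Q$---those $Y\in\Gamma(Q)$ with $\pi[\xi,Y]=0$---span $Q$ pointwise, since the orthogonal projections of the transverse coordinate vector fields onto $Q$ yield $n$ linearly independent basic sections.

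The heart of the argument is to compute $R^{Q}(\xi,Y)Z$ for basic $Y,Z\in\Gamma(Q)$. Since $Z$ is basic, $\nabla_{\xi}^{Q}Z=\pi[\xi,Z]=0$. Since $Y$ is basic, $[\xi,Y]$ lies in $T\mathcal{F}=\mathbb{R}\xi$, and a short calculation using $g(\nabla_{Z}^{M}\xi,\xi)=0$ and $\nabla_{\xi}^{M}\xi=\kappa^{\#}$ gives $[\xi,Y]=-g(Y,\kappa^{\#})\xi$, so $\nabla_{[\xi,Y]}^{Q}Z=-g(Y,\kappa^{\#})\nabla_{\xi}^{Q}Z=0$. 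Moreover, $\nabla^{Q}$ descends to the Levi-Civita connection of the local leaf space, so $\nabla_{Y}^{Q}Z$ is itself basic and $\nabla_{\xi}^{Q}\nabla_{Y}^{Q}Z=0$ as well. All three contributions vanish, giving $R^{Q}(\xi,Y)Z=0$. By $C^{\infty}$-linearity of the curvature tensor in $Y$ and $Z$, combined with the spanning property from the first paragraph, this upgrades to $R^{Q}(\xi,\cdot)=0$ on all of $\Gamma(Q)$.

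The standard formula for the curvature of a spin connection then yields
\[
R^{\Sigma Q}(\xi,Y)\varphi=\frac{1}{4}\sum_{i,j}g(R^{Q}(\xi,Y)e_{i},e_{j})\,e_{i}\cdot_{Q}e_{j}\cdot_{Q}\varphi=0,
\]
and, interpreting the parenthetical expression in the statement of the lemma as $R^{\Sigma Q}(X,Y)$ (up to the sign convention used for the bracket term), one concludes $K(\xi,Y)=\xi\cdot Y\cdot R^{\Sigma Q}(\xi,Y)=0$.

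The main obstacle is the structural input of the first paragraph: that basic sections of $Q$ locally span $Q$ pointwise and that $\nabla^{Q}$ preserves basicness. Both are standard consequences of the adapted-chart description of Riemannian foliations, but they require some care to set up cleanly. A more computational alternative, likely closer to the proof in the thesis reference, would expand $\nabla^{\Sigma Q}$ directly via the formulas in \eqref{covDerviEqs}, compute $[\nabla_{\xi}^{\Sigma Q},\nabla_{Y}^{\Sigma Q}]-\nabla_{[\xi,Y]}^{\Sigma Q}$ in terms of $R^{\Sigma M}$ together with Clifford-algebra corrections involving $\Omega$, $\kappa$, and $\nabla^{M}\xi$, and then exploit the outer factor $\xi\cdot Y\cdot$ via Clifford identities to force the required cancellations; this bypasses $R^{Q}$ but involves considerably more bookkeeping.
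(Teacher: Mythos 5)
Your proof is correct, and your reading of the statement is the right one: the printed formula has two typos ($X_{i}$ should be $X$, and $+\nabla_{[X,Y]}^{\Sigma Q}$ should be $-\nabla_{[X,Y]}^{\Sigma Q}$), which you correctly infer from how the lemma is invoked in the proof of Lemma \ref{CommutatorLemma}, where $K(e_{i},\xi)$ is precisely the defect between $e_{i}\cdot\xi\cdot\nabla_{e_{i}}^{\Sigma Q}\nabla_{\xi}^{\Sigma Q}$ and $e_{i}\cdot\xi\cdot(\nabla_{\xi}^{\Sigma Q}\nabla_{e_{i}}^{\Sigma Q}+\nabla_{[e_{i},\xi]}^{\Sigma Q})$. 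Note that the paper itself does not supply a proof of this lemma; it is cited from \cite{HabThesis}, so there is no internal argument to compare line by line.

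Your route --- reduce to the vanishing of the transverse curvature $i_{\xi}R^{Q}=0$ for the transverse Levi--Civita connection, then push through the spin-lift formula $R^{\Sigma Q}(X,Y)=\tfrac{1}{4}\sum_{i,j}g(R^{Q}(X,Y)e_{i},e_{j})\,e_{i}\cdot e_{j}$ --- is the standard one and almost certainly the argument used in the thesis. All the ingredients you invoke check out: for basic $Y,Z$, one has $\nabla_{\xi}^{Q}Z=\pi[\xi,Z]=0$, $\nabla_{Y}^{Q}Z$ is again basic (as $\nabla^{Q}$ descends to the Levi--Civita connection of the local leaf spaces), and $[\xi,Y]=-g(Y,\kappa^{\#})\xi$ (matching the computation $[e_{i},\xi]=\kappa(e_{i})\xi$ appearing later in the paper), so all three terms of $R^{Q}(\xi,Y)Z$ vanish; tensoriality then extends this to all $Y,Z$. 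The two structural inputs you flag --- local pointwise spanning of $Q$ by basic sections via the adapted-chart projections, and that $\nabla^{Q}$ preserves basicness --- are indeed standard for Riemannian foliations and are the only places where any care is needed. The alternative brute-force route you sketch in the last paragraph (expanding via \eqref{covDerviEqs} and cancelling Clifford terms) would also work but is needlessly heavier; the conceptual reduction to $R^{Q}$ is cleaner.
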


\begin{lemma}
\label{xiCommutesWithCovDeriv}The transverse connection commutes with the
Clifford action of $\xi $; that is, $\nabla _{X}^{\Sigma Q}\left( \xi \cdot
_{M}\varphi \right) =\xi \cdot _{M}\nabla _{X}^{\Sigma Q}\varphi $ for any
spinor field $\varphi \in \Gamma \left( \Sigma Q\right) $ and $X\in \Gamma
\left( TM\right) $. In particular, this means that the spinor field $\xi
\cdot _{M}\varphi $ is basic if and only if $\varphi $ is basic.

\begin{proof}
We use (\ref{covDerviEqs}). For $Z\in \Gamma \left( Q\right) $,%
\begin{eqnarray*}
\nabla _{Z}^{\Sigma Q}\left( \xi \cdot _{M}\varphi \right) &=&\nabla
_{Z}^{\Sigma M}\left( \xi \cdot _{M}\varphi \right) -\frac{1}{2}\xi \cdot
_{M}\nabla _{Z}^{M}\xi \cdot _{M}\xi \cdot _{M}\varphi \\
&=&\left( \nabla _{Z}^{\Sigma M}\xi \right) \cdot _{M}\varphi +\xi \cdot
_{M}\nabla _{Z}^{\Sigma M}\varphi -\frac{1}{2}\left( \nabla _{Z}^{M}\xi
\right) \cdot _{M}\varphi \\
&=&\xi \cdot _{M}\nabla _{Z}^{\Sigma M}\varphi -\frac{1}{2}\xi \cdot _{M}\xi
\cdot _{M}\left( \nabla _{Z}^{M}\xi \right) \cdot _{M}\varphi \\
&=&\xi \cdot _{M}\nabla _{Z}^{\Sigma Q}\varphi ,
\end{eqnarray*}%
since $\nabla _{Z}^{M}\xi $ is orthogonal to $\xi $. Next, 
\begin{eqnarray*}
\nabla _{\xi }^{\Sigma Q}\left( \xi \cdot _{M}\varphi \right) &=&\nabla
_{\xi }^{\Sigma M}\left( \xi \cdot _{M}\varphi \right) -\frac{1}{2}\Omega
\cdot _{M}\xi \cdot _{M}\varphi -\frac{1}{2}\xi \cdot _{M}\kappa \cdot
_{M}\xi \cdot _{M}\varphi \\
&=&H\cdot _{M}\varphi +\xi \cdot _{M}\nabla _{\xi }^{\Sigma M}\varphi -\frac{%
1}{2}\xi \cdot _{M}\Omega \cdot _{M}\varphi -\frac{1}{2}\kappa \cdot
_{M}\varphi \\
&=&\xi \cdot _{M}\nabla _{\xi }^{\Sigma M}\varphi -\frac{1}{2}\xi \cdot
_{M}\Omega \cdot _{M}\varphi -\frac{1}{2}\xi \cdot _{M}\xi \cdot _{M}\kappa
\cdot _{M}\varphi \\
&=&\xi \cdot _{M}\nabla _{\xi }^{\Sigma Q}\varphi .
\end{eqnarray*}
\end{proof}
\end{lemma}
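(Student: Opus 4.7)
My plan is to verify the identity $\nabla_X^{\Sigma Q}(\xi \cdot_M \varphi) = \xi \cdot_M \nabla_X^{\Sigma Q}\varphi$ by splitting into the two cases $X = Z \in \Gamma(Q)$ and $X = \xi$, which suffice by $C^\infty(M)$-linearity in $X$. In each case I would use the conversion formulas (\ref{covDerviEqs}) to exchange $\nabla^{\Sigma Q}$ for $\nabla^{\Sigma M}$, apply the ordinary Leibniz rule for the spin connection $\nabla^{\Sigma M}$ to $\xi\cdot_M\varphi$, and then simplify using the Clifford relations $\xi\cdot_M\xi = -1$ and $\xi\cdot_M W = -W\cdot_M \xi$ for any $W \in \Gamma(Q)$, together with the fact that $\nabla_Z^M\xi \in \Gamma(Q)$ (since $\xi$ is a unit vector field).

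For $X = Z \in \Gamma(Q)$, after substituting (\ref{covDerviEqs}) and expanding $\nabla_Z^{\Sigma M}(\xi\cdot_M\varphi)$ via the Leibniz rule, I expect a term $(\nabla_Z^M\xi)\cdot_M\varphi$ to appear together with the correction $-\tfrac{1}{2}\xi\cdot_M(\nabla_Z^M\xi)\cdot_M\xi\cdot_M\varphi$. Pushing the right-hand $\xi$ through $\nabla_Z^M\xi$ flips a sign and then combines with $\xi\cdot_M\xi = -1$; the residue should regroup as $\xi\cdot_M\nabla_Z^{\Sigma Q}\varphi$ after reapplying (\ref{covDerviEqs}) in reverse. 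For $X = \xi$, there are three correction terms involving $\Omega$ and $\kappa$; here the key observations are that $\Omega$ is an even-degree Clifford element built from pairs of $Q$-vectors, so $\Omega$ commutes with $\xi$, while $\kappa \in \Gamma(Q)$ anticommutes with $\xi$. The resulting parity mismatch is exactly what makes the extra terms cancel in pairs, leaving only $\xi\cdot_M\nabla_\xi^{\Sigma Q}\varphi$.

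The main obstacle I anticipate is sign bookkeeping in the $\xi$ case, where the commuting behavior of $\Omega$ and the anticommuting behavior of $\kappa$ must be tracked simultaneously with the extra Leibniz-generated term coming from $\nabla_\xi^{\Sigma M}\xi = \kappa^{\#}$; a single mis-tracked sign would produce a nonzero residual involving $\Omega$ or $\kappa$. For the final sentence of the lemma, basicness of a section of $\Sigma Q$ is encoded by the vanishing of the flow-direction component of $\nabla^{\Sigma Q}$, so the just-proved commutation together with the invertibility of Clifford multiplication by $\xi$ (since $\xi\cdot_M\xi = -1$, the map $\varphi \mapsto \xi\cdot_M\varphi$ is a bijection) immediately implies that $\xi\cdot_M\varphi$ is basic if and only if $\varphi$ is.
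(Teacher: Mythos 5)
Your proposal is correct and follows essentially the same route as the paper: split into $X=Z\in\Gamma(Q)$ and $X=\xi$, convert $\nabla^{\Sigma Q}$ to $\nabla^{\Sigma M}$ via (\ref{covDerviEqs}), apply the Leibniz rule, simplify with $\xi\cdot_M\xi=-1$, $\xi$ anticommuting with $Q$-vectors and commuting with $\Omega$, and reconvert; the final inference about basicness via invertibility of $\xi\cdot_M$ also matches. One small quibble: in the $\xi$-case the correction terms do not literally ``cancel in pairs'' --- rather, the Leibniz term $\kappa^{\#}\cdot_M\varphi$ combines with $-\tfrac12\xi\cdot_M\kappa\cdot_M\xi\cdot_M\varphi=-\tfrac12\kappa\cdot_M\varphi$ to leave $\tfrac12\kappa\cdot_M\varphi=-\tfrac12\xi\cdot_M\xi\cdot_M\kappa\cdot_M\varphi$, which is then absorbed when applying (\ref{covDerviEqs}) in reverse, while the $\Omega$ term simply passes through $\xi$ --- but this does not affect the validity of the plan.
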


\vspace{0in}We define the transversal Dirac operator $D_{Q}$ on $\Gamma
\left( \Sigma Q\right) $ as%
\begin{equation*}
D_{Q}=\sum_{i=1}^{n}e_{i}\cdot _{Q}\nabla _{e_{i}}^{\Sigma Q}-\frac{1}{2}%
\kappa \cdot _{Q}.
\end{equation*}%
This differential operator is first-order and transversally elliptic. Using
the metric on $\Sigma Q$ induced from the metric on $\Sigma M$, we obtain
the $L^{2}$ metric on $\Gamma \left( \Sigma Q\right) $.

\begin{lemma}
(From \cite[p. 31]{HabThesis}) The operator $D_{Q}$ is self-adjoint on $%
L^{2}\left( \Gamma \left( \Sigma Q\right) \right) $.
\end{lemma}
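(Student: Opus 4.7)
The plan is to establish symmetry of $D_Q$ on smooth sections by an integration-by-parts argument on $M$ and then use the fact that $D_Q$ is a first-order differential operator that is transversally elliptic, whose principal symbol agrees with that of a symmetric operator in transverse directions. The crux is to show that the zeroth-order mean-curvature correction $-\tfrac{1}{2}\kappa\cdot_Q$ in the definition of $D_Q$ is precisely what compensates for the non-closedness contributed by the flow direction when one integrates by parts in the transverse variables only.

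First I would fix $\varphi,\psi\in\Gamma(\Sigma Q)$ and introduce the complex vector field $V\in\Gamma(Q\otimes\mathbb{C})$ defined by $g(V,Z)=\langle Z\cdot_Q\varphi,\psi\rangle$ for $Z\in\Gamma(Q)$, with vanishing $\xi$-component. Computing $\mathrm{div}_M(V)$ in a local orthonormal frame $\{e_1,\ldots,e_n\}$ of $Q$ that is $\nabla^M$-parallel at the basepoint, together with metric compatibility of $\nabla^{\Sigma Q}$ and the skew-Hermitian property $\langle e_i\cdot_Q\alpha,\beta\rangle=-\langle\alpha,e_i\cdot_Q\beta\rangle$, yields at the basepoint
\begin{equation*}
\sum_{i=1}^{n} g\bigl(\nabla_{e_i}^M V,e_i\bigr)
= -\langle D_Q^{0}\varphi,\psi\rangle + \langle\varphi,D_Q^{0}\psi\rangle,
\end{equation*}
where $D_Q^{0}=\sum_i e_i\cdot_Q\nabla_{e_i}^{\Sigma Q}$. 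The $\xi$-component of the divergence contributes $g(\nabla_{\xi}^M V,\xi)=-g(V,\kappa^{\#})=\langle\kappa\cdot_Q\varphi,\psi\rangle$, since $\nabla_{\xi}^M\xi=\kappa^{\#}$. Integrating $\mathrm{div}_M(V)=0$ over $M$ therefore gives
\begin{equation*}
\langle D_Q^{0}\varphi,\psi\rangle_{L^2}
- \langle\varphi,D_Q^{0}\psi\rangle_{L^2}
= \langle\kappa\cdot_Q\varphi,\psi\rangle_{L^2}.
\end{equation*}

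Using the anti-Hermiticity of Clifford multiplication by the real vector $\kappa^{\#}$, namely $\langle\varphi,\kappa\cdot_Q\psi\rangle=-\langle\kappa\cdot_Q\varphi,\psi\rangle$, a direct substitution shows that the discrepancy above is exactly cancelled by the zeroth-order term in $D_Q$:
\begin{equation*}
\langle D_Q\varphi,\psi\rangle_{L^2}
- \langle\varphi,D_Q\psi\rangle_{L^2}
= \langle\kappa\cdot_Q\varphi,\psi\rangle_{L^2}
- \tfrac{1}{2}\langle\kappa\cdot_Q\varphi,\psi\rangle_{L^2}
+ \tfrac{1}{2}\langle\varphi,\kappa\cdot_Q\psi\rangle_{L^2}
= 0.
\end{equation*}
Thus $D_Q$ is formally self-adjoint on $\Gamma(\Sigma Q)$. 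To promote this to self-adjointness on $L^{2}(\Gamma(\Sigma Q))$, I would then invoke the standard fact that a symmetric first-order differential operator on a closed manifold whose principal symbol is Hermitian and whose associated Friedrichs-type extension coincides with its maximal domain is essentially self-adjoint; the transversal ellipticity coupled with the Riemannian flow structure (e.g.\ the existence of basic partitions of unity and control by the full elliptic $D_M$) allows the usual cutoff/regularization argument.

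The main obstacle I anticipate is the bookkeeping in the divergence calculation: one must correctly isolate the $\xi$-direction contribution $g(\nabla_\xi^M V,\xi)$, which produces the mean-curvature term that the $-\tfrac{1}{2}\kappa\cdot_Q$ is tuned to absorb. Once this is handled cleanly, the rest is a routine application of symmetry of Clifford multiplication and metric compatibility of $\nabla^{\Sigma Q}$, the latter inherited directly from the ambient spin connection via \eqref{covDerviEqs}.
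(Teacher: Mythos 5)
The paper gives no proof of this lemma; it cites it directly to \cite[p. 31]{HabThesis}. Your divergence-theorem approach is the standard one and reaches the correct formal self-adjointness identity, but there are two points worth tightening. First, both of your intermediate signs are off: with $g(V,Z)=\langle Z\cdot_Q\varphi,\psi\rangle$ one in fact gets $\sum_i g(\nabla^M_{e_i}V,e_i)=\langle D_Q^0\varphi,\psi\rangle-\langle\varphi,D_Q^0\psi\rangle$ and $g(\nabla^M_\xi V,\xi)=-g(V,\kappa^\#)=-\langle\kappa\cdot_Q\varphi,\psi\rangle$ (since $g(V,\kappa^\#)=\langle\kappa\cdot_Q\varphi,\psi\rangle$ by your own definition of $V$). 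The two errors cancel, so the integrated identity $\langle D_Q^0\varphi,\psi\rangle_{L^2}-\langle\varphi,D_Q^0\psi\rangle_{L^2}=\langle\kappa\cdot_Q\varphi,\psi\rangle_{L^2}$ and the resulting symmetry of $D_Q$ come out right, but the line-by-line computation should be corrected. Also, you cannot choose a local orthonormal frame of $Q$ that is $\nabla^M$-parallel at a point, because $Q$ is not $\nabla^M$-parallel (one has $\nabla^M_{e_i}\xi\neq 0$ in general); what you actually need, and what is achievable, is a frame that is parallel at the basepoint for the \emph{transverse} Levi-Civita connection $\nabla^{tr}=\pi\circ\nabla^M$ on $Q$, since that is the connection compatible with the Clifford module structure on $\Sigma Q$.

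Second, the passage from formal symmetry to self-adjointness on $L^2$ is not quite set up correctly. $D_Q$ is only transversally elliptic — its principal symbol vanishes in the direction of $\xi^*$ — so one cannot invoke elliptic regularity or a Friedrichs-extension-coincides-with-maximal-domain argument in the usual way. The clean route is Chernoff's (or Gaffney's) theorem: a symmetric first-order differential operator with smooth coefficients on a closed (hence complete) Riemannian manifold has bounded propagation speed and is therefore essentially self-adjoint on smooth sections, with no ellipticity hypothesis needed. Replacing your closing appeal to transversal ellipticity and cutoffs with this statement gives the wanted self-adjointness of $D_Q$ on $L^2(\Gamma(\Sigma Q))$.
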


The basic Dirac operator $D_{b}$ is the restriction of 
\begin{equation*}
PD_{Q}=\sum_{i=1}^{n}e_{i}\cdot _{Q}\nabla _{e_{i}}^{\Sigma Q}-\frac{1}{2}%
\kappa _{b}\cdot _{Q}
\end{equation*}%
to the set $\Gamma _{b}\left( \Sigma Q\right) $ of basic sections (sections $%
\varphi $ in $\Gamma \left( \Sigma Q\right) $ satisfying $\nabla _{\xi
}^{\Sigma Q}\varphi =0$):%
\begin{equation*}
D_{b}=\left. PD_{Q}\right\vert _{\Gamma _{b}\left( \Sigma Q\right) }.
\end{equation*}%
In the above, $P:L^{2}\left( \Gamma \left( \Sigma Q\right) \right)
\rightarrow L^{2}\left( \Gamma _{b}\left( \Sigma Q\right) \right) $ is the
orthogonal projection onto basic sections, and $\kappa _{b}=P_{b}\kappa $
where $P_{b}:L^{2}\left( \Omega ^{\ast }\left( M\right) \right) \rightarrow
L^{2}\left( \Omega _{b}^{\ast }\left( M\right) \right) $ (see \cite{AL}, 
\cite{PaRi}, \cite{BKR2}). It is always true that $P$ preserves the smooth
sections and that $\kappa _{b}$ is a closed one-form. Recall that the basic
Dirac operator preserves the set of basic sections and is transversally
elliptic and essentially self-adjoint (on the basic sections). Therefore, by
the spectral theory of transversally elliptic operators, it is a Fredholm
operator and has discrete spectrum (\cite{EK}, \cite{EKG}). Observe that
when $\kappa $ is a basic form, 
\begin{equation*}
\kappa _{b}=\kappa \text{,~}D_{b}=\left. D_{Q}\right\vert _{\Gamma
_{b}\left( \Sigma Q\right) }.
\end{equation*}%
If the mean curvature is not necessarily basic, then 
\begin{eqnarray*}
D_{Q} &=&\sum_{i=1}^{n}e_{i}\cdot _{Q}\nabla _{e_{i}}^{\Sigma Q}-\frac{1}{2}%
\kappa \cdot _{Q} \\
&=&\sum_{i=1}^{n}e_{i}\cdot _{Q}\nabla _{e_{i}}^{\Sigma Q}-\frac{1}{2}\kappa
_{b}\cdot _{Q}+\frac{1}{2}\left( \kappa _{b}-\kappa \right) \cdot _{Q} \, ,\\
\left. D_{Q}\right\vert _{\Gamma _{b}\left( \Sigma Q\right) } &=&D_{b}+\frac{%
1}{2}\left( \kappa _{b}-\kappa \right) \cdot _{Q}\, .
\end{eqnarray*}

Next, we give the relationship between $D_{M}$ and $D_{Q}$ on $\Gamma \left(
\Sigma M\right) $. By (\ref{covDerviEqs}) we have%
\begin{eqnarray}
D_{M} &=&D_{Q}-\frac{1}{2}\xi \cdot _{M}\Omega \cdot _{M}+\xi \cdot
_{M}\nabla _{\xi }^{\Sigma Q}\text{ for }n\text{ even,}  \notag \\
D_{M} &=&\xi \cdot _{M}\left( D_{Q}\oplus \left( -D_{Q}\right) \right) -%
\frac{1}{2}\xi \cdot _{M}\Omega \cdot _{M}+\xi \cdot _{M}\left( \nabla _{\xi
}^{\Sigma Q\oplus \Sigma Q}\right) \text{ for }n\text{ odd.}
\label{D_mD_tr formula}
\end{eqnarray}%
Using the formulas above, the restrictions of the Dirac operators $D_{M}$
and $D_{b}$ to basic sections are related by%
\begin{eqnarray}
\left. D_{M}\right\vert _{\Gamma _{b}\left( \Sigma Q\right) } &=&D_{b}+\frac{%
1}{2}\left( \kappa _{b}-\kappa \right) \cdot _{Q}-\frac{1}{2}\xi \cdot
_{M}\Omega \cdot _{M}\text{ for }n\text{ even,}  \notag \\
\left. D_{M}\right\vert _{\Gamma _{b}\left( \Sigma Q\right) } &=&\xi \cdot
_{M}D_{b}+\frac{1}{2}\xi \cdot _{M}\left( \kappa _{b}-\kappa \right) \cdot
_{Q}-\frac{1}{2}\xi \cdot _{M}\Omega \cdot _{M}\text{ for }n\text{ odd.}
\label{D_MD_b formula}
\end{eqnarray}

For $n$ even, respectively $n$ odd, and for any basic spinor field $\varphi $%
, we have that $D_{b}\left( \xi \cdot _{M}\varphi \right) =-\xi \cdot
_{M}D_{b}\left( \varphi \right) $, respectively $D_{b}\left( \xi \cdot
_{M}\varphi \right) =\xi \cdot _{M}D_{b}\left( \varphi \right) $. Hence, the
spectrum of $D_{b}$ is symmetric about $0$ for $n$ even.

Observe that Rummler's formula is%
\begin{eqnarray*}
d\left( \xi ^{\ast }\right) &=&-\kappa \wedge \xi ^{\ast }+\varphi _{0} \\
&=&\xi ^{\ast }\wedge \nabla _{\xi }^{M}\xi ^{\ast
}+\sum_{j=1}^{n}e^{j}\wedge \nabla _{e_{j}}^{M}\xi ^{\ast } \\
&=&-\kappa \wedge \xi ^{\ast }+2\Omega ,
\end{eqnarray*}%
so that $\varphi _{0}=2\Omega $. Since $\varphi _{0}$ is always of type $%
\left( 2,0\right) $ in $\Lambda ^{\ast }Q\wedge \Lambda ^{\ast }T\mathcal{F}$
for flows, we see $\Omega \in \Gamma \left( M,\Lambda ^{2}Q\right) $.

\begin{lemma}
If $\kappa $ is a basic form, then $\Omega $ is basic.

\begin{proof}
We see that%
\begin{equation*}
i_{\xi }\Omega =\frac{1}{2}i_{\xi }\left( d\left( \xi ^{\ast }\right)
+\kappa \wedge \xi ^{\ast }\right) =0,
\end{equation*}%
which is clear since $\varphi _{0}=2\Omega $ is of type $\left( 2,0\right) $
in $\Lambda ^{\ast }Q\wedge \Lambda ^{\ast }T\mathcal{F}$. Next, since $%
\kappa $ is a basic closed form,%
\begin{eqnarray*}
i_{\xi }d\Omega &=&\frac{1}{2}i_{\xi }\left( \left( d\kappa \right) \wedge
\xi ^{\ast }-\kappa \wedge d\left( \xi ^{\ast }\right) \right) \\
&=&\frac{1}{2}i_{\xi }\left( -\kappa \wedge \left( -\kappa \wedge \xi ^{\ast
}+2\Omega \right) \right) \\
&=&\frac{1}{2}i_{\xi }\left( -\kappa \wedge \left( 2\Omega \right) \right)
=0.
\end{eqnarray*}
\end{proof}
\end{lemma}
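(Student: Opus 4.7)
The plan is to verify the two defining conditions of a basic form separately: $i_\xi \Omega = 0$ and $\mathcal{L}_\xi \Omega = 0$. Since the second is equivalent to $i_\xi d\Omega = 0$ once the first is established (via Cartan's formula $\mathcal{L}_\xi = i_\xi d + d i_\xi$), the work reduces to controlling $i_\xi \Omega$ and $i_\xi d\Omega$.

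First I would note that $i_\xi \Omega = 0$ is essentially free. Immediately before the lemma it is observed that Rummler's formula forces $\varphi_0 = 2\Omega$ to be of type $(2,0)$ in $\Lambda^{\ast}Q \wedge \Lambda^{\ast}T\mathcal{F}$, so $\Omega \in \Gamma(M,\Lambda^2 Q)$ and $i_\xi \Omega = 0$ by construction. Alternatively one can apply $i_\xi$ directly to Rummler's identity, using that $\kappa \in \Gamma(Q^\ast)$ (whence $i_\xi \kappa = 0$) and that $i_\xi \xi^\ast = 1$, to reach the same conclusion.

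For the second condition, the plan is to differentiate Rummler's formula $2\Omega = d\xi^\ast + \kappa \wedge \xi^\ast$. This gives
\begin{equation*}
2\, d\Omega = d\kappa \wedge \xi^\ast - \kappa \wedge d\xi^\ast
= d\kappa \wedge \xi^\ast - \kappa \wedge (-\kappa \wedge \xi^\ast + 2\Omega)
= d\kappa \wedge \xi^\ast - 2\,\kappa \wedge \Omega,
\end{equation*}
using $\kappa \wedge \kappa = 0$ and substituting Rummler's formula back in for $d\xi^\ast$. Since $\kappa$ is assumed basic, I would invoke the standard fact (Domínguez/Álvarez-López) that the basic component of the mean curvature of a Riemannian foliation is closed, so $d\kappa = 0$, giving $d\Omega = -\kappa \wedge \Omega$. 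Applying $i_\xi$ and using $i_\xi \kappa = 0$ together with the already-established $i_\xi \Omega = 0$ yields $i_\xi d\Omega = 0$.

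The only delicate point is the invocation of $d\kappa = 0$: it is not logically automatic from ``basic'' alone, but is a known theorem for mean curvature forms of Riemannian flows. Everything else is a direct computation from Rummler's formula. Once both $i_\xi \Omega = 0$ and $i_\xi d\Omega = 0$ are in hand, Cartan's formula gives $\mathcal{L}_\xi \Omega = 0$, and $\Omega$ is basic.
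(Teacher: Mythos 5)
Your argument is correct and is essentially the same as the paper's: both establish $i_\xi\Omega=0$ from the type-$(2,0)$ observation and then compute $i_\xi d\Omega=0$ by differentiating Rummler's formula, substituting $d\xi^\ast$ back in, and using $\kappa\wedge\kappa=0$ together with $i_\xi\kappa=0$. The one place you flag as "delicate," the use of $d\kappa=0$, is in fact already secured by the paper's earlier remark that $\kappa_b$ is always closed (citing \cite{AL}); when $\kappa$ is basic one has $\kappa=\kappa_b$, so closedness is not an extra hypothesis but a consequence, exactly as you suspected.
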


\begin{remark}
The calculation above also shows that in the case where $\kappa $ is not
necessarily basic, 
\begin{equation*}
i_{\xi }d\Omega =\frac{1}{2}d_{1,0}\kappa =\frac{1}{2}d_{1,0}\left( \kappa
-\kappa _{b}\right) .
\end{equation*}
\end{remark}

\vspace{0in}For the case when $\kappa =\kappa _{b}$, by the equations above
for $D_{M}$ when $n$ is even, we see that $D_{M}$ preserves the basic
sections of $\Sigma M=\Sigma Q$, and since $D_{M}$ is orthogonally
diagonalizable over $L^{2}\left( \Sigma M\right) =L^{2}\left( \Sigma
Q\right) $, there exists an orthonormal basis of $L^{2}\left( \Gamma
_{b}\left( \Sigma Q\right) \right) $ consisting of eigensections of $D_{M}$.
Similarly, there exists an orthonormal basis of $L^{2}\left( \Gamma
_{b}\left( \Sigma Q\right) \right) ^{\bot }$ consisting of eigensections of $%
D_{M}$. The analogous facts are true for $n$ odd and $\left.
D_{M}\right\vert _{\Gamma _{b}\left( \Sigma Q\oplus \Sigma Q\right) }$ and $%
\left. D_{M}\right\vert _{\left( \Gamma _{b}\left( \Sigma Q\oplus \Sigma
Q\right) \right) ^{\bot }}$.We have shown the following.

\begin{lemma}
Suppose that $\kappa $ is basic. Then the operator $D_{M}$ decomposes as $%
\left. D_{M}\right\vert _{\Gamma _{b}\left( \Sigma Q\right) }\oplus \left.
D_{M}\right\vert _{\Gamma _{b}\left( \Sigma Q\right) ^{\bot }}$ as an $L^{2}$%
-orthogonal direct sum, when $n$ is even. It decomposes as $\left.
D_{M}\right\vert _{\Gamma _{b}\left( \Sigma Q\oplus \Sigma Q\right) }\oplus
\left. D_{M}\right\vert _{\left( \Gamma _{b}\left( \Sigma Q\oplus \Sigma
Q\right) \right) ^{\bot }}$ when $n$ is odd.
\end{lemma}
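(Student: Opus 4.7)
The plan is to verify that when $\kappa=\kappa_{b}$, the subspace of basic sections is invariant under $D_{M}$; the orthogonal splitting will then follow automatically from self-adjointness.

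First I would invoke the formulas (\ref{D_MD_b formula}). When $\kappa$ is basic, the term $\frac{1}{2}\left(\kappa_{b}-\kappa\right)\cdot_{Q}$ vanishes, so one obtains, for $n$ even,
\begin{equation*}
\left. D_{M}\right\vert _{\Gamma _{b}\left( \Sigma Q\right) }=D_{b}-\tfrac{1}{2}\xi\cdot_{M}\Omega\cdot_{M},
\end{equation*}
and for $n$ odd,
\begin{equation*}
\left. D_{M}\right\vert _{\Gamma _{b}\left( \Sigma Q\oplus\Sigma Q\right) }=\xi\cdot_{M}D_{b}-\tfrac{1}{2}\xi\cdot_{M}\Omega\cdot_{M}.
\end{equation*}
The key point is then that each summand on the right sends basic sections to basic sections. $D_{b}$ does so by its very definition. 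Lemma \ref{xiCommutesWithCovDeriv} supplies the fact that Clifford multiplication by $\xi$ commutes with $\nabla^{\Sigma Q}_{\xi}$, hence preserves the condition $\nabla^{\Sigma Q}_{\xi}\varphi=0$ that cuts out $\Gamma_{b}(\Sigma Q)$. Finally, by the preceding lemma, $\Omega$ is basic whenever $\kappa$ is; combined with Lemma \ref{xiCommutesWithCovDeriv} (and a short identical computation showing $\nabla_{\xi}^{\Sigma Q}$ commutes with Clifford multiplication by any basic two-form on $Q$), this yields that $\xi\cdot_{M}\Omega\cdot_{M}$ preserves basicness. Therefore $D_{M}$ leaves $\Gamma_{b}(\Sigma Q)$ invariant in the even case, and leaves $\Gamma_{b}(\Sigma Q\oplus\Sigma Q)$ invariant in the odd case.

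For the orthogonal splitting, I would then argue as follows. The operator $D_{M}$ is essentially self-adjoint on $L^{2}(\Sigma M)$, and the space of basic sections is a closed subspace preserved by $D_{M}$ (and its smooth part is preserved on smooth sections). By a standard argument --- if $\psi\in\Gamma_{b}(\Sigma Q)^{\perp}\cap\mathrm{dom}(D_{M})$ and $\varphi\in\Gamma_{b}(\Sigma Q)\cap\mathrm{dom}(D_{M})$, then $\langle D_{M}\psi,\varphi\rangle=\langle\psi,D_{M}\varphi\rangle=0$ since $D_{M}\varphi$ is again basic --- the orthogonal complement is also $D_{M}$-invariant. Hence $D_{M}$ splits as the claimed $L^{2}$-orthogonal direct sum, and the same argument applies verbatim to $\Sigma Q\oplus\Sigma Q$ in the odd case.

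The only step requiring genuine care is verifying that Clifford multiplication by the (basic) two-form $\Omega$ preserves basic sections, since Lemma \ref{xiCommutesWithCovDeriv} is stated for $\xi$ alone. I would handle it by writing $\Omega=\frac{1}{2}\sum e_{j}\cdot_{M}(\nabla^{M}_{e_{j}}\xi)\cdot_{M}$, expanding $\nabla^{\Sigma Q}_{\xi}(\Omega\cdot_{M}\varphi)$ with the product rule, and using that $\nabla^{\Sigma Q}_{\xi}\Omega=0$ (a consequence of $\Omega$ being basic, i.e.\ $\mathcal{L}_{\xi}\Omega=0$ and $i_{\xi}\Omega=0$, together with $\mathcal{L}_{\xi}g|_{\xi^{\perp}}=0$ giving the compatibility of the transverse Levi-Civita connection with the flow). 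This is the only place where the basicness hypothesis on $\kappa$ is really used, through its consequence that $\Omega$ is basic.
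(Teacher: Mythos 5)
Your proof is correct and follows essentially the same route as the paper: verify that $D_{M}$ preserves the basic sections via the formula for $\left. D_{M}\right\vert_{\Gamma_{b}(\Sigma Q)}$ when $\kappa$ is basic, then pass to the $L^{2}$-orthogonal decomposition using self-adjointness. The paper's version is terser --- it simply asserts the invariance and cites orthogonal diagonalizability of $D_{M}$ to produce eigenbases of the two subspaces --- while you unpack the term-by-term basicness check (for $D_{b}$, $\xi\cdot_{M}$, and $\Omega\cdot_{M}$, the last via the observation that $\Omega$ is basic when $\kappa$ is) and use the adjoint identity $\langle D_{M}\psi,\varphi\rangle=\langle\psi,D_{M}\varphi\rangle$ to show the complement is invariant; these are the same argument in slightly different dress.
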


We call the operator $D_{\mathcal{F}}:=\xi \cdot _{M}\nabla _{\xi }^{\Sigma
Q}$ acting on $\Gamma \left( \Sigma Q\right) $ the \textbf{tangential Dirac
operator.}

\begin{proposition}
\label{DFSymmProp}The operator $D_{\mathcal{F}}$ is symmetric, and $\ker D_{%
\mathcal{F}}=L^{2}\left( \Gamma _{b}\left( \Sigma Q\right) \right) $.

\begin{proof}
For any (smooth) spinor fields $\psi $ and $\varphi $, letting $\left(
\bullet ,\bullet \right) $ be the pointwise inner product,%
\begin{eqnarray*}
\left( D_{\mathcal{F}}\psi ,\varphi \right) &=&\left( \xi \cdot _{M}\nabla
_{\xi }^{\Sigma Q}\psi ,\varphi \right) \\
&=&\left( \nabla _{\xi }^{\Sigma Q}\left( \xi \cdot _{M}\psi \right)
,\varphi \right)
\end{eqnarray*}%
by Lemma \ref{xiCommutesWithCovDeriv}. Then%
\begin{eqnarray*}
\left( D_{\mathcal{F}}\psi ,\varphi \right) &=&\xi \left( \xi \cdot _{M}\psi
,\varphi \right) -\left( \xi \cdot _{M}\psi ,\nabla _{\xi }^{\Sigma
Q}\varphi \right) \\
&=&\xi \left( \xi \cdot _{M}\psi ,\varphi \right) +\left( \psi ,\xi \cdot
_{M}\nabla _{\xi }^{\Sigma Q}\varphi \right) \\
&=&\xi \left( \xi \cdot _{M}\psi ,\varphi \right) +\left( \psi ,D_{\mathcal{F%
}}\varphi \right) .
\end{eqnarray*}%
Observe that, letting $f$ be the function $f=\left( \xi \cdot _{M}\psi
,\varphi \right) $, 
\begin{equation*}
\int_{M}\xi \left( f\right) =-\int_{M}f\func{div}\left( \xi \right) =0,
\end{equation*}%
since $\xi $ generates a Riemannian flow and thus is divergence-free. Thus,
by integrating $\left\langle D_{\mathcal{F}}\psi ,\varphi \right\rangle
=\left\langle \psi ,D_{\mathcal{F}}\varphi \right\rangle $. Next, if $D_{%
\mathcal{F}}\left( \varphi \right) =0$ for some section $\varphi \in \Gamma
\left( \Sigma Q\right) $, then 
\begin{equation*}
\xi \cdot _{M}0=\xi \cdot _{M}\xi \cdot _{M}\nabla _{\xi }^{\Sigma Q}\varphi
=-\nabla _{\xi }^{\Sigma Q}\varphi ,
\end{equation*}%
so $\varphi $ is basic.
\end{proof}
\end{proposition}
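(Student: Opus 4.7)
The plan is to prove both assertions by a direct pointwise computation followed by an integration by parts. For the symmetry, I would expand the pointwise inner product $(D_\mathcal{F}\psi,\varphi) = (\xi\cdot_M\nabla_\xi^{\Sigma Q}\psi,\varphi)$ using two ingredients: Lemma \ref{xiCommutesWithCovDeriv}, which lets me move Clifford multiplication by $\xi$ through $\nabla_\xi^{\Sigma Q}$, and the fact that Clifford multiplication by the unit vector $\xi$ is skew-Hermitian, so $(\xi\cdot_M\alpha,\beta)=-(\alpha,\xi\cdot_M\beta)$. Combined with metric compatibility of $\nabla^{\Sigma Q}$, which gives $\xi(\alpha,\beta)=(\nabla_\xi^{\Sigma Q}\alpha,\beta)+(\alpha,\nabla_\xi^{\Sigma Q}\beta)$, this should produce an identity of the shape
\[
(D_\mathcal{F}\psi,\varphi) \;=\; \xi\bigl((\xi\cdot_M\psi,\varphi)\bigr) + (\psi,D_\mathcal{F}\varphi).
\]

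The second step is to integrate over $M$ and show that the scalar term disappears, i.e.\ $\int_M \xi(f)\,\mathrm{vol}_g = 0$ for $f=(\xi\cdot_M\psi,\varphi)$. Here I would invoke the hypothesis that $\xi$ generates a Riemannian flow: by the Rummler-type formula or by a standard calculation using $\mathcal{L}_\xi(g|_{\xi^\perp})=0$ together with $|\xi|=1$, one gets $\operatorname{div}(\xi)=0$, so $\int_M \xi(f)\,\mathrm{vol}_g = -\int_M f\,\operatorname{div}(\xi)\,\mathrm{vol}_g = 0$. This establishes symmetry on smooth sections of $\Sigma Q$.

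For the kernel characterization, the argument is purely algebraic at the pointwise level. If $D_\mathcal{F}\varphi = \xi\cdot_M\nabla_\xi^{\Sigma Q}\varphi = 0$, then Clifford multiplying once more by $\xi$ and using $\xi\cdot_M\xi\cdot_M = -\mathrm{id}$ gives $\nabla_\xi^{\Sigma Q}\varphi = 0$, which is exactly the condition that $\varphi$ be basic. The reverse inclusion is immediate from the definition of $D_\mathcal{F}$. Passing to the $L^2$-closure then identifies $\ker D_\mathcal{F}$ with $L^2(\Gamma_b(\Sigma Q))$.

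I do not expect a serious obstacle: Lemma \ref{xiCommutesWithCovDeriv} already handles the only nontrivial commutation, and the Riemannian-flow hypothesis supplies divergence-freeness of $\xi$ precisely when it is needed. The only point requiring mild care is that the equality $\ker D_\mathcal{F} = L^2(\Gamma_b(\Sigma Q))$ is phrased at the $L^2$-level while the computation is done smoothly; but since $D_\mathcal{F}$ is a first-order tangential differential operator and the subspace of smooth basic sections is invariantly defined and dense in its $L^2$-closure, the smooth statement extends by a routine density/closure argument to the stated $L^2$-form.
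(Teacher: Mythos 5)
Your proposal is correct and follows essentially the same route as the paper: move $\xi\cdot_M$ through $\nabla_\xi^{\Sigma Q}$ via Lemma~\ref{xiCommutesWithCovDeriv}, use metric compatibility and skew-Hermitianness of Clifford multiplication to produce the divergence term $\xi(f)$, kill it by $\operatorname{div}\xi=0$, and characterize the kernel by Clifford multiplying once more by $\xi$. The only difference is that you make the final density/closure step to the $L^2$-statement explicit, which the paper leaves implicit.
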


\begin{lemma}
\label{CommutatorLemma}We have $D_{Q}D_{\mathcal{F}}=-D_{\mathcal{F}%
}D_{Q}+\kappa \cdot _{M}D_{\mathcal{F}}=-D_{\mathcal{F}}\left( D_{Q}+\kappa
\cdot _{M}\right) $.
\end{lemma}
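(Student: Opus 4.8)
The plan is to compute the anticommutator $D_Q D_{\mathcal F} + D_{\mathcal F} D_Q$ directly, writing everything in a local orthonormal frame $\{e_i\}_{i=1}^n$ of $Q$ (which we may assume is chosen so that $\nabla^M_{e_i} e_j = 0$ and $[\xi, e_i]$ is tangent to $Q$ at the point under consideration — the latter being possible since the flow is Riemannian, so the leafwise holonomy acts by isometries on $Q$). With $D_Q = \sum_i e_i \cdot_Q \nabla^{\Sigma Q}_{e_i} - \tfrac12 \kappa\cdot_Q$ and $D_{\mathcal F} = \xi\cdot_M \nabla^{\Sigma Q}_\xi$, I would expand both compositions and track the terms by type. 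The first key tool is Lemma~\ref{xiCommutesWithCovDeriv}: since $\nabla^{\Sigma Q}_X$ commutes with $\xi\cdot_M$ for every $X\in\Gamma(TM)$, and since $\xi\cdot_M$ anticommutes with $e_i\cdot_M = e_i\cdot_Q$ (or commutes, with the appropriate sign coming from the identification in the $n$ odd versus $n$ even cases — but in all cases $\xi\cdot_M$ anticommutes with $e_i\cdot_Q$ because $\xi\perp e_i$ in the Clifford algebra of $\Sigma Q$ when $n$ is even, and one checks the $n$ odd convention $Z\cdot_Q\varphi = Z\cdot_M\xi\cdot_M\varphi$ gives the same), the "naive" leading terms $\sum_i e_i\cdot_Q \nabla^{\Sigma Q}_{e_i}\xi\cdot_M\nabla^{\Sigma Q}_\xi$ and $\xi\cdot_M\nabla^{\Sigma Q}_\xi \sum_i e_i\cdot_Q\nabla^{\Sigma Q}_{e_i}$ should cancel up to curvature and commutator corrections.

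Next I would organize the correction terms into three groups. The commutator of the covariant derivatives $\nabla^{\Sigma Q}_{e_i}\nabla^{\Sigma Q}_\xi - \nabla^{\Sigma Q}_\xi\nabla^{\Sigma Q}_{e_i}$ is, up to the $\nabla^{\Sigma Q}_{[e_i,\xi]}$ term, precisely the curvature operator $R^{\Sigma Q}(e_i,\xi)$; and here Lemma~\ref{curvatureVanishingLemma} is decisive: after Clifford-multiplying by $e_i\cdot_Q\,\xi\cdot_Q$ and summing, the curvature contribution vanishes because $K(X,Y) = 0$ when $X=\xi$. This is exactly why the statement holds in the clean form claimed — absent that vanishing there would be a curvature term on the right-hand side. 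The term $\sum_i e_i\cdot_Q\,\xi\cdot_M\,\nabla^{\Sigma Q}_{[e_i,\xi]}$ needs care: $[e_i,\xi] = \nabla^M_{e_i}\xi - \nabla^M_\xi e_i$, and only the $Q$-component survives (the $\mathcal F$-component produces a term proportional to $\xi\cdot_M\nabla^{\Sigma Q}_\xi$, which will recombine into the mean-curvature term), while the $Q$-component is governed by $\Omega$ and the second fundamental form; I would expect most of this to cancel against the first-order pieces coming from differentiating $e_i\cdot_Q$ in the first composition. The mean-curvature $-\tfrac12\kappa\cdot_Q$ pieces contribute $-\tfrac12\kappa\cdot_Q\,\xi\cdot_M\nabla^{\Sigma Q}_\xi$ from $D_Q D_{\mathcal F}$ and $-\tfrac12\,\xi\cdot_M\nabla^{\Sigma Q}_\xi(\kappa\cdot_Q\,\cdot)$ from $D_{\mathcal F}D_Q$; using that $\xi(\kappa) $-type derivatives and the sign relations $\kappa\cdot_Q\,\xi\cdot_M = -\xi\cdot_M\,\kappa\cdot_Q$ (again $\kappa\in\Gamma(Q)$) these should combine to $+\kappa\cdot_M D_{\mathcal F}$, matching the claimed right-hand side. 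Finally, since $D_{\mathcal F}\varphi = \xi\cdot_M\nabla^{\Sigma Q}_\xi\varphi$ and $\nabla^{\Sigma Q}_\xi(\kappa\cdot_M\varphi)$ picks up $(\nabla^M_\xi\kappa)\cdot_M\varphi$, I would double-check that $\nabla^M_\xi\kappa$ contributes nothing extra, which follows from the type considerations (the relevant component lies in the leaf direction and is killed by the Clifford relations), yielding the last equality $-D_{\mathcal F}(D_Q + \kappa\cdot_M)$.

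The main obstacle I anticipate is not any single deep fact but the bookkeeping: keeping straight the two Clifford multiplications $\cdot_M$ and $\cdot_Q$ together with the two identifications of $\Sigma M$ with $\Sigma Q$ (or $\Sigma Q\oplus\Sigma Q$) according to the parity of $n$, and making sure the sign with which $\xi\cdot_M$ commutes past $e_i\cdot_Q$ and past $\kappa\cdot_Q$ is handled uniformly. I would therefore carry out the computation once abstractly using only the two structural inputs — (i) $\nabla^{\Sigma Q}$ commutes with $\xi\cdot_M$ (Lemma~\ref{xiCommutesWithCovDeriv}) and (ii) the $\xi$-curvature of $\Sigma Q$ vanishes (Lemma~\ref{curvatureVanishingLemma}) — plus the first-order symbol identity $e_i\cdot_Q\,\xi\cdot_M + \xi\cdot_M\,e_i\cdot_Q = 0$, and only at the end specialize the signs; alternatively one can use (\ref{D_mD_tr formula}) to relate $D_Q$ and $D_M$ and deduce the anticommutator from the corresponding (and simpler) identity for $D_M$, but this seems to introduce as many sign checks as it removes, so I would favor the direct route.
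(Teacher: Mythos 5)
Your overall strategy matches the paper's: expand the anticommutator in a local orthonormal frame of $Q$, use Lemma~\ref{xiCommutesWithCovDeriv} to pull $\xi\cdot_M$ past $\nabla^{\Sigma Q}$, and use the vanishing of the $\xi$-Clifford curvature from Lemma~\ref{curvatureVanishingLemma}. You have correctly located the two structural inputs. However, your treatment of the bracket term $[e_i,\xi]$ contains a genuine gap, and in fact it assumes away precisely the term that the lemma asserts is present.

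First, your proposed frame normalization is geometrically impossible when $\kappa\neq 0$. You ask for a local orthonormal frame with $[\xi,e_i]$ tangent to $Q$. But for any $e_i\in\Gamma(Q)$,
\begin{equation*}
\left\langle [\xi ,e_{i}],\xi \right\rangle =\left\langle \nabla _{\xi
}e_{i},\xi \right\rangle -\left\langle \nabla _{e_{i}}\xi ,\xi \right\rangle
=-\left\langle e_{i},\nabla _{\xi }\xi \right\rangle -\tfrac{1}{2}
e_{i}\left\langle \xi ,\xi \right\rangle =-\kappa (e_{i}),
\end{equation*}
so $[\xi,e_i]$ always has $\xi$-component exactly $-\kappa(e_i)$, independent of how $e_i$ is chosen. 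You cannot normalize this component away. The frame freedom lies in the $Q$-component of the bracket: for a Riemannian flow one can choose $e_i$ to be \emph{basic} (foliate), in which case $[\xi,e_i]$ lies entirely in $T\mathcal{F}$ and equals $-\kappa(e_i)\xi$; this is the frame the paper implicitly uses. Your normalization is the opposite of the achievable one.

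Second, and as a consequence, your claim that ``only the $Q$-component survives'' of $[e_i,\xi]$ is backwards, and the remark that ``most of this should cancel against first-order pieces coming from differentiating $e_i\cdot_Q$'' is handwaving about terms that are not there in the correct frame. In a basic orthonormal frame, $[e_i,\xi]=\kappa(e_i)\xi$ has no $Q$-component at all, so $\nabla^{\Sigma Q}_{[e_i,\xi]}=\kappa(e_i)\nabla^{\Sigma Q}_\xi$, and the Clifford sum $\sum_i\kappa(e_i)\,e_i\cdot_M\,\xi\cdot_M\,\nabla^{\Sigma Q}_\xi=\kappa\cdot_M\,\xi\cdot_M\,\nabla^{\Sigma Q}_\xi$ recombines with the $-\tfrac{1}{2}\kappa\cdot_Q$ contributions to give precisely the $\kappa\cdot_M D_{\mathcal{F}}$ on the right-hand side. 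In your setup that term would simply be absent, and the argument would only prove $D_Q D_{\mathcal{F}}=-D_{\mathcal{F}}D_Q$, which is the $\kappa=0$ special case. To repair the proposal, replace the frame choice by a basic orthonormal frame of $Q$, compute $[e_i,\xi]=\kappa(e_i)\xi$ as above, and the rest of your outline goes through as intended.
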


\begin{proof}
We see that, letting $e_{1},...,e_{n}$ be a local orthonormal frame for $Q$, 
\begin{eqnarray*}
D_{Q}\left( \xi \cdot _{M}\nabla _{\xi }^{\Sigma Q}\right)  &=&\left(
\sum_{i=1}^{n}e_{i}\cdot _{Q}\nabla _{e_{i}}^{\Sigma Q}-\frac{1}{2}\kappa
\cdot _{Q}\right) \left( \xi \cdot _{M}\nabla _{\xi }^{\Sigma Q}\right)  \\
&=&\sum_{i=1}^{n}\left( e_{i}\cdot _{M}\xi \cdot _{M}\nabla _{e_{i}}^{\Sigma
Q}\nabla _{\xi }^{\Sigma Q}\right) -\frac{1}{2}\kappa \cdot _{M}\xi \cdot
_{M}\nabla _{\xi }^{\Sigma Q},
\end{eqnarray*}%
by Lemma \ref{xiCommutesWithCovDeriv}. Then%
\begin{equation*}
D_{Q}\left( \xi \cdot _{M}\nabla _{\xi }^{\Sigma Q}\right)
=\sum_{i=1}^{n}\left( K\left( e_{i},\xi \right) +e_{i}\cdot _{M}\xi \cdot
_{M}\left( \nabla _{\xi }^{\Sigma Q}\nabla _{e_{i}}^{\Sigma Q}+\nabla _{
\left[ e_{i},\xi \right] }^{\Sigma Q}\right) \right) -\frac{1}{2}\kappa
\cdot _{M}\xi \cdot _{M}\nabla _{\xi }^{\Sigma Q}.
\end{equation*}

By Lemma \ref{curvatureVanishingLemma}, $K\left( e_{i},\xi \right) =0$ for
every $i$. Note that $\left[ e_{i},\xi \right] \in T\mathcal{F}$ so that%
\begin{eqnarray*}
\left[ e_{i},\xi \right] &=&\left\langle \left[ e_{i},\xi \right] ,\xi
\right\rangle \xi =\left\langle \nabla _{e_{i}}\xi -\nabla _{\xi }e_{i},\xi
\right\rangle \xi \\
&=&\frac{1}{2}e_{i}\left\langle \xi ,\xi \right\rangle -\left\langle \nabla
_{\xi }e_{i},\xi \right\rangle \xi \\
&=&\left\langle e_{i},\nabla _{\xi }\xi \right\rangle \xi =\kappa \left(
e_{i}\right) \xi .
\end{eqnarray*}%
Thus,%
\begin{eqnarray*}
D_{Q}\left( \xi \cdot _{M}\nabla _{\xi }^{\Sigma Q}\right)
&=&\sum_{i=1}^{n}e_{i}\cdot _{M}\xi \cdot _{M}\left( \nabla _{\xi }^{\Sigma
Q}\nabla _{e_{i}}^{\Sigma Q}+\kappa \left( e_{i}\right) \nabla _{\xi
}^{\Sigma Q}\right) -\frac{1}{2}\kappa \cdot _{M}\xi \cdot _{M}\nabla _{\xi
}^{\Sigma Q} \\
&=&\sum_{i=1}^{n}e_{i}\cdot _{M}\xi \cdot _{M}\nabla _{\xi }^{\Sigma
Q}\nabla _{e_{i}}^{\Sigma Q}+\sum_{i=1}^{n}\kappa \left( e_{i}\right)
e_{i}\cdot _{M}\xi \cdot _{M}\nabla _{\xi }^{\Sigma Q}-\frac{1}{2}\kappa
\cdot _{M}\xi \cdot _{M}\nabla _{\xi }^{\Sigma Q} \\
&=&\sum_{i=1}^{n}e_{i}\cdot _{M}\xi \cdot _{M}\nabla _{\xi }^{\Sigma
Q}\nabla _{e_{i}}^{\Sigma Q}+\frac{1}{2}\kappa \cdot _{M}\xi \cdot
_{M}\nabla _{\xi }^{\Sigma Q} \\
&=&-\left( \xi \cdot _{M}\nabla _{\xi }^{\Sigma Q}\right) \left( e_{i}\cdot
_{Q}\nabla _{e_{i}}^{\Sigma Q}\right) -\left( \xi \cdot _{M}\nabla _{\xi
}^{\Sigma Q}\right) \frac{1}{2}\kappa \cdot _{M} \\
&=&-\left( \xi \cdot _{M}\nabla _{\xi }^{\Sigma Q}\right) D_{Q}-\left( \xi
\cdot _{M}\nabla _{\xi }^{\Sigma Q}\right) \kappa \cdot _{M} \\
&=&-\left( \xi \cdot _{M}\nabla _{\xi }^{\Sigma Q}\right) D_{Q}+\kappa \cdot
_{M}\left( \xi \cdot _{M}\nabla _{\xi }^{\Sigma Q}\right) .
\end{eqnarray*}
\end{proof}

\begin{corollary}
\label{minimalCaseDFeigenvSpectrumCor}If $\kappa =0$, then the spectrum of $%
D_{\mathcal{F}}$ contains a countable number of real eigenvalues, and there
exists a complete orthonormal basis of $L^{2}\left( \Sigma Q\right) $
consisting of smooth eigensections of $D_{\mathcal{F}}$ .

\begin{proof}
If $\kappa =0$, we consider the essentially self-adjoint, elliptic operator 
\begin{equation*}
L=D_{Q}+D_{\mathcal{F}}.
\end{equation*}%
There exists a complete orthonormal basis of $L^{2}\left( \Sigma Q\right) $
consisting of smooth eigensections of $L$, and each eigenspace is
finite-dimensional. By Lemma \ref{CommutatorLemma}, $D_{Q}D_{\mathcal{F}}+D_{%
\mathcal{F}}D_{Q}=0$, so $L^{2}=D_{Q}^{2}+D_{\mathcal{F}}^{2}$, and $D_{%
\mathcal{F}}$ commutes with $L^{2}$. Then $D_{\mathcal{F}}$ restricts to a
self-adjoint operator on the finite-dimensional eigenspaces of $L^{2}$ and
thus has pure real eigenvalue spectrum restricted to those subspaces. The
result follows.
\end{proof}
\end{corollary}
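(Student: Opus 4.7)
The strategy is to construct a single elliptic self-adjoint operator on $L^{2}(\Sigma Q)$ whose spectral decomposition is respected by $D_{\mathcal{F}}$, and then diagonalize $D_{\mathcal{F}}$ inside the resulting finite-dimensional eigenspaces. I would set
\begin{equation*}
L := D_{Q} + D_{\mathcal{F}}
\end{equation*}
and first verify that $L$ is a symmetric, first-order elliptic operator. Symmetry is immediate from the self-adjointness of $D_{Q}$ together with Proposition \ref{DFSymmProp}. For ellipticity, note that $\sigma(D_{Q})(\eta)$ involves only the $Q$-part of the covector $\eta$ while $\sigma(D_{\mathcal{F}})(\eta)$ involves only its $\xi^{\ast}$-component, and at the principal-symbol level Lemma \ref{CommutatorLemma} (with $\kappa = 0$) gives $\{\sigma(D_{Q})(\eta),\sigma(D_{\mathcal{F}})(\eta)\}=0$. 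Hence $\sigma(L)(\eta)^{2}=\sigma(D_{Q})(\eta)^{2}+\sigma(D_{\mathcal{F}})(\eta)^{2}=|\eta|^{2}\,\mathrm{id}$, which is invertible for every $\eta\neq 0$. Since $M$ is closed, $L$ is then essentially self-adjoint, and $L^{2}$ is a nonnegative elliptic self-adjoint second-order operator with discrete real spectrum and finite-dimensional eigenspaces $E_{\mu}\subset C^{\infty}(\Sigma Q)$.

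The next step exploits the hypothesis $\kappa = 0$ directly: Lemma \ref{CommutatorLemma} becomes $D_{Q}D_{\mathcal{F}} + D_{\mathcal{F}}D_{Q} = 0$, and therefore
\begin{equation*}
L^{2} = D_{Q}^{2} + D_{\mathcal{F}}^{2}.
\end{equation*}
Using this anticommutation one computes $D_{\mathcal{F}}D_{Q}^{2} = -D_{Q}D_{\mathcal{F}}D_{Q} = D_{Q}^{2}D_{\mathcal{F}}$, and $D_{\mathcal{F}}$ trivially commutes with $D_{\mathcal{F}}^{2}$; hence $D_{\mathcal{F}}$ commutes with $L^{2}$ and preserves each eigenspace $E_{\mu}$.

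Finally, by Proposition \ref{DFSymmProp} the restriction of $D_{\mathcal{F}}$ to $E_{\mu}$ is a symmetric operator on a finite-dimensional inner product space, so standard linear algebra produces an orthonormal basis of $E_{\mu}$ consisting of real eigensections of $D_{\mathcal{F}}$; these are automatically smooth because $E_{\mu}\subset C^{\infty}(\Sigma Q)$. Collecting these bases over the countably many eigenvalues $\mu$ of $L^{2}$ produces the required complete $L^{2}$-orthonormal basis of smooth $D_{\mathcal{F}}$-eigensections with real eigenvalues. The only nonroutine step is the principal-symbol computation for $L$, which however reduces at once to the Clifford identity $\eta\cdot\eta = -|\eta|^{2}$ once the vanishing of the symbol anticommutator is observed; I do not anticipate any serious obstacle beyond bookkeeping between the even and odd parities of $n$.
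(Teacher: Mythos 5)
Your proposal reproduces the paper's argument exactly: form $L = D_Q + D_{\mathcal{F}}$, use the anticommutation from Lemma \ref{CommutatorLemma} (with $\kappa=0$) to get $L^2 = D_Q^2 + D_{\mathcal{F}}^2$ and the commutation $[D_{\mathcal{F}},L^2]=0$, then diagonalize the symmetric restriction of $D_{\mathcal{F}}$ on the finite-dimensional eigenspaces of $L^2$. The only difference is that you supply the principal-symbol computation justifying ellipticity of $L$, which the paper states without proof; this is a welcome but minor elaboration, not a different route.
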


\begin{remark}
As shown in Example \ref{T2Example}, it is possible that the spectrum of $D_{%
\mathcal{F}}$ is $\mathbb{R}$ but also contains a countable number of real
eigenvalues, whose smooth eigensections form a complete orthonormal basis of 
$L^{2}\left( \Sigma Q\right) $.
\end{remark}

\begin{remark}
Suppose instead that $\kappa =df$. Note that this means that $f$ is a basic
function, since otherwise $\kappa $ would have $\xi ^{\ast }$ components.
Then we modify the metric on $M$ so that $\left\langle \xi ,\xi
\right\rangle ^{\prime }=e^{2f}$ but otherwise keep everything the same.
Then the leafwise volume form is 
\begin{equation*}
\chi ^{\prime }=e^{f}\xi ^{\ast },
\end{equation*}%
and 
\begin{equation*}
d\chi ^{\prime }=-\left( \kappa -df\right) \wedge \chi +\varphi _{0}=\varphi
_{0},
\end{equation*}%
so that $\kappa ^{\prime }=0$. Then in the new metric $L^{\prime
}=D_{Q}^{\prime }+D_{\mathcal{F}}^{\prime }$ has the same properties, and $%
D_{\mathcal{F}}^{\prime }$ commutes with $\left( L^{\prime }\right) ^{2}$.
But observe that $D_{\mathcal{F}}^{\prime }=e^{-f}D_{\mathcal{F}}$ because
for all $\psi \in \Gamma \left( \Sigma Q\right) $, $\xi ^{\prime }\cdot
_{M}^{\prime }\psi =\xi \cdot _{M}\psi $, and $\xi ^{\prime }=e^{-f}\xi $.
In examples it appears that $D_{\mathcal{F}}$ does not have a complete basis
of eigenvectors, even though $D_{\mathcal{F}}^{\prime }$ does.
\end{remark}

\section{Adiabatic limits}

In this section, given the bundle-like metric $g$ on $\left( M,\mathcal{F}%
\right) $, we consider the family of metrics%
\begin{equation*}
g_{f}=f^{2}\xi ^{\ast }\otimes \xi ^{\ast }+g_{\xi ^{\bot }},
\end{equation*}%
where $f$ is a positive basic function on $M$. This metric is bundle-like
for the foliation and has the same transverse metric as the original metric,
and $\xi _{f}=\frac{1}{f}\xi $ is the corresponding unit tangent vector
field of the foliation.

\begin{lemma}
The spaces $L^{2}\left( \Gamma _{b}\left( \Sigma Q\right) \right) $ and $%
L^{2}\left( \Gamma _{b}\left( \Sigma Q\right) \right) ^{\bot }$ are the same
for any such metric $g_{f}$.
\end{lemma}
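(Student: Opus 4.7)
The proof rests on two observations: the space of smooth basic sections is the same for every $f$, and orthogonality to basic sections is preserved when passing from $g$ to $g_f$.

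First, note that $\xi_f = f^{-1}\xi$ spans the same line bundle as $\xi$, so the normal bundle $Q = \xi^{\bot}$ and the transverse metric $g|_Q$ are the same for every $f$, and hence so are the spinor bundle $\Sigma Q$ and its hermitian structure. The essential step is to verify that the transverse Levi-Civita connection on $Q$ is likewise $f$-independent: for $X, Y \in \Gamma(Q)$, the Koszul formula for $\langle \nabla^{g_f}_X Y, Z\rangle_{g_f}$ with $Z \in \Gamma(Q)$ involves only inner products and bracket components that are determined by the transverse metric alone (using $T\mathcal{F} \perp Q$ to discard the leafwise components that would otherwise see $f$); for the leafwise direction, the bundle-like hypothesis gives $\nabla^Q_\xi Y = \pi[\xi, Y]$, i.e.\ the Bott connection, which depends only on the foliation. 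The induced spin connection $\nabla^{\Sigma Q}$ is therefore identical for all $f$, and $C^\infty$-linearity in the vector slot gives $\nabla^{\Sigma Q}_{\xi_f}\varphi = f^{-1}\nabla^{\Sigma Q}_\xi \varphi$, so the basic condition $\nabla^{\Sigma Q}_{\xi_f}\varphi = 0$ is independent of $f$.

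Since the volume form of $g_f$ equals $f\,\mathrm{dvol}_g$ with $f$ positive and bounded away from $0$ on the compact manifold $M$, the two $L^2$ norms on $\Gamma(\Sigma Q)$ are equivalent, so $L^2(\Gamma(\Sigma Q))$ and its closed subspace $L^2(\Gamma_b(\Sigma Q))$ are the same as sets. For the orthogonal complements, suppose $\psi \in L^2(\Gamma(\Sigma Q))$ satisfies $\int_M(\psi,\varphi)_Q\, f\,\mathrm{dvol}_g = 0$ for every basic $\varphi$. Since $f$ is a positive basic function, multiplication by $f$ is a bijection on $\Gamma_b(\Sigma Q)$; rewriting the integrand as $(\psi, f\varphi)_Q$ and letting $\tilde\varphi = f\varphi$ range over all basic sections shows that $\psi$ is orthogonal to $L^2(\Gamma_b(\Sigma Q))$ in the original metric $g$ as well, and the argument reverses using $1/f$.

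The main obstacle is the claim that $\nabla^{\Sigma Q}$ does not depend on $f$; this is where the bundle-like hypothesis does the real work, through the identification of the leafwise component of the transverse Levi-Civita connection with the metric-free Bott connection, together with the fact that the transverse metric itself is unchanged under $g \mapsto g_f$.
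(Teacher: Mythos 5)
Your proof is correct and takes essentially the same route as the paper's: the crux in both is to transfer the Jacobian factor $f$ from the volume form onto the basic section, using that $f\beta$ is again basic, so that orthogonality in the two $L^2$ inner products agrees. You supply more detail than the paper on why $\Gamma_b(\Sigma Q)$ is $f$-independent (via the Bott connection and $C^\infty$-linearity of the connection in the direction slot), which the paper simply asserts, but the overall argument is the same.
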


\begin{proof}
The space $\Gamma _{b}\left( \Sigma Q\right) $ does not depend on the metric
and thus is independent of $f$. Since $f$ is a smooth positive function, we
see easily that $L^{2}\left( \Gamma _{b}\left( \Sigma Q\right) \right) $ is
also independent of $f$. Next, suppose that $\alpha $ is orthogonal to any
given $\beta \in \Gamma _{b}\left( \Sigma Q\right) $ with respect to the old
metric. Then if we let $\left( \bullet ,\bullet \right) $ denote the
original pointwise metric on $\Sigma Q$, we have that $\left( \alpha ,\beta
\right) $ is independent of $f$ since $\beta $ has no components with $\xi
^{\ast }$. Also, $\nu \wedge \xi ^{\ast }$ is the original volume form on $M$
with $\nu $ the transverse volume form. In the new metric, $f\nu \wedge \xi
^{\ast }$ is the volume form. Then 
\begin{equation*}
\left\langle \alpha ,\beta \right\rangle _{f}=\int \left( \alpha ,\beta
\right) f\nu \wedge \xi ^{\ast }=\int \left( \alpha ,f\beta \right) \nu
\wedge \xi ^{\ast }=0
\end{equation*}%
since $f\beta $ is also a basic form. Therefore, we also have that the space 
$L^{2}\left( \Gamma _{b}\left( \Sigma Q\right) \right) ^{\bot }$ is
independent of $f$.
\end{proof}

Recall that the basic component $\kappa _{b}$ of the mean curvature form $%
\kappa $ is always a closed form and defines a class $\left[ \kappa _{b}%
\right] $ in basic cohomology $H_{b}^{1}\left( M,\mathcal{F}\right) $ that
is invariant of the transverse Riemannian foliation structure and
bundle-like metric (see \cite{AL}). Such a Riemannian foliation is taut if
and only if $\left[ \kappa _{b}\right] =0$. Also, recall from \cite{Dom}:
given any Riemannian foliation $\left( M,\mathcal{F}\right) $ with
bundle-like metric, there exists another bundle-like metric on $M$ with
identical transverse metric such that the mean curvature is basic.

\begin{theorem}
\label{CollapsingThm}Let $M$ be a closed Riemannian spin manifold, endowed
with an oriented Riemannian flow given by the unit vector field $\xi $.
Suppose that the mean curvature form $\kappa $ is basic. Let $D_{M,f}$ be
the Dirac operator associated to the metric $g_{f}$ and spin structure. The
eigenvalues of $D_{M,f}$ are $\left\{ \lambda _{j}\left( f\right) \right\}
_{j=1}^{\infty }\cup \left\{ \mu _{k}\left( f\right) \right\} _{k=1}^{\infty
}$, corresponding to the restrictions of $D_{M,f}$ to $L^{2}\left( \Gamma
_{b}\left( \Sigma Q\right) \right) $ and $L^{2}\left( \Gamma _{b}\left(
\Sigma Q\right) \right) ^{\bot }$, respectively. Then these eigenvalues can
be indexed such that

\begin{enumerate}
\item 
\begin{enumerate}
\item ($n$ even) as $f\rightarrow 0$, $\lambda _{j}\left( f\right) $
converges to eigenvalues of the basic Dirac operator $D_{b}$.

\item ($n$ odd) as $f\rightarrow 0$, $\lambda _{j}\left( f\right) $
converges to the eigenvalues of the basic Dirac operators $\pm D_{b}$.
\end{enumerate}

In the cases above, the convergence is uniform in $j$.

\item If $\mathcal{F}$ is taut (i.e. $\kappa =dh$ for a function $h$), the
nonzero eigenvalues in $\left\{ \mu _{k}\left( f\right) \right\} $ approach $%
\pm \infty $ as $f\rightarrow 0$ uniformly with $\frac{df}{f}$ uniformly
bounded.
\end{enumerate}
\end{theorem}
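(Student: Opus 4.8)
Here is how I would approach the final statement.

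The plan is to conjugate $D_{M,f}$ into a fixed transversal Dirac operator, plus a leafwise term that blows up like $1/f$, plus a zeroth-order correction of norm $O(f)$, and then read off the two parts of the spectrum separately. First I would rewrite $D_{M,f}$ in terms of the original metric: since $\xi_{f}=f^{-1}\xi$ and $\xi_{f}^{\ast}=f\,\xi^{\ast}$, Rummler's formula applied to $\chi_{f}=\xi_{f}^{\ast}$ gives $\kappa_{f}=\kappa-\frac{df}{f}$ (still basic, as $f$ is) and $\Omega_{f}=f\,\Omega$, while $\nabla^{\Sigma Q}$ is unchanged because the transverse metric is, so $\nabla^{\Sigma Q,f}_{e_{i}}=\nabla^{\Sigma Q}_{e_{i}}$ and $\nabla^{\Sigma Q,f}_{\xi_{f}}=f^{-1}\nabla^{\Sigma Q}_{\xi}$; and the canonical fibrewise-isometric identification $\Sigma M_{g_{f}}\cong\Sigma M_{g}$ (matching adapted orthonormal frames) carries $\xi_{f}\cdot_{M_{f}}$ and $e_{i}\cdot_{M_{f}}$ to $\xi\cdot_{M}$ and $e_{i}\cdot_{M}$. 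Substituting these into the formulas (\ref{D_mD_tr formula}) written for $g_{f}$ and conjugating by $f^{1/2}$ — a unitary $L^{2}(\Sigma Q,g_{f})\to L^{2}(\Sigma Q,g)$ since $\mathrm{vol}_{g_{f}}=f\,\mathrm{vol}_{g}$, which preserves both $L^{2}(\Gamma_{b}(\Sigma Q))$ and its orthogonal complement by the lemma just above — the zeroth-order terms built from $\frac{df}{f}$ cancel (using $\xi f=0$), leaving $\widetilde{D}_{M,f}:=f^{1/2}D_{M,f}f^{-1/2}=A+\tfrac1f D_{\mathcal{F}}+R_{f}$, where $A=D_{Q}$ for $n$ even and $A=\xi\cdot_{M}(D_{Q}\oplus(-D_{Q}))$ for $n$ odd is a fixed transversally elliptic self-adjoint operator, and $R_{f}=-\tfrac f2\,\xi\cdot_{M}\Omega\cdot_{M}$ has $\|R_{f}\|=O(f)$ since $M$ is compact. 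Being unitarily equivalent to $D_{M,f}$, the operator $\widetilde{D}_{M,f}$ respects the splitting and has the same eigenvalues.

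For part (1) I would restrict to $L^{2}(\Gamma_{b}(\Sigma Q))$, where $D_{\mathcal{F}}=0$ (Proposition \ref{DFSymmProp}) and $D_{Q}|_{\Gamma_{b}}=D_{b}$, so $\widetilde{D}_{M,f}|_{\Gamma_{b}}=D_{b}+R_{f}$ for $n$ even and $=\xi\cdot_{M}(D_{b}\oplus(-D_{b}))+R_{f}$ for $n$ odd. In the odd case $\xi\cdot_{M}$ is unitary with $(\xi\cdot_{M})^{2}=-1$, so the square of $\xi\cdot_{M}(D_{b}\oplus(-D_{b}))$ is $D_{b}^{2}\oplus D_{b}^{2}$ and its spectrum is $\mathrm{spec}(D_{b})\cup\mathrm{spec}(-D_{b})$. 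Since $D_{b}$ has discrete spectrum and $\|R_{f}\|=O(f)\to0$, self-adjoint perturbation theory (min--max / Weyl's inequality) gives an indexing with $|\lambda_{j}(f)-\lambda_{j}^{0}|\le\|R_{f}\|=O(f)$, where $\{\lambda_{j}^{0}\}$ is the spectrum of $D_{b}$ (resp.\ of $\pm D_{b}$); this is the asserted convergence, uniform in $j$.

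For part (2) I would work on $\Gamma_{b}(\Sigma Q)^{\bot}$. Expanding $\|\widetilde{D}_{M,f}\varphi\|^{2}$, the term $\tfrac1f\langle\{A,D_{\mathcal{F}}\}\varphi,\varphi\rangle$ equals $\tfrac1f\langle(\kappa\cdot_{M})D_{\mathcal{F}}\varphi,\varphi\rangle$ by Lemma \ref{CommutatorLemma} (and its straightforward $n$-odd analogue), and the remaining cross terms are bounded by $O(f)$ times $\|A\varphi\|\,\|\varphi\|$ or $\|D_{\mathcal{F}}\varphi\|\,\|\varphi\|$; Cauchy--Schwarz absorptions then give, for small $f$, $\|\widetilde{D}_{M,f}\varphi\|^{2}\ge\tfrac12\|A\varphi\|^{2}+\tfrac1{2f^{2}}\|D_{\mathcal{F}}\varphi\|^{2}-C\|\varphi\|^{2}$. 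It then suffices to show $\beta(f):=\inf\{\|A\varphi\|^{2}+f^{-2}\|D_{\mathcal{F}}\varphi\|^{2}:\varphi\in\Gamma_{b}(\Sigma Q)^{\bot},\ \|\varphi\|=1\}\to\infty$ as $f\to0$. If this failed, there would be $f_{j}\to0$ and unit $\varphi_{j}\in\Gamma_{b}(\Sigma Q)^{\bot}$ with $\|A\varphi_{j}\|$ bounded and $\|D_{\mathcal{F}}\varphi_{j}\|\to0$; since $A+D_{\mathcal{F}}$ is elliptic (as the operator $L$ in Corollary \ref{minimalCaseDFeigenvSpectrumCor}), $\{\varphi_{j}\}$ is bounded in $H^{1}$, so a subsequence converges in $L^{2}$ (Rellich) to a unit $\varphi_{\infty}$; closedness of $D_{\mathcal{F}}$ forces $D_{\mathcal{F}}\varphi_{\infty}=0$, i.e.\ $\varphi_{\infty}\in\ker D_{\mathcal{F}}=L^{2}(\Gamma_{b}(\Sigma Q))$ by Proposition \ref{DFSymmProp}, contradicting $\varphi_{\infty}\in\Gamma_{b}(\Sigma Q)^{\bot}$. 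Hence $\beta(f)\to\infty$, so $\|\widetilde{D}_{M,f}\varphi\|\to\infty$ uniformly over the unit sphere of $\Gamma_{b}(\Sigma Q)^{\bot}$, i.e.\ $\inf_{k}|\mu_{k}(f)|\to\infty$. (If one prefers, the taut hypothesis lets one first normalize to $\kappa=0$ via $g\mapsto e^{2h}\xi^{\ast}\otimes\xi^{\ast}+g_{\xi^{\bot}}$, which rescales $f$ by the basic factor $e^{-h}$ and preserves boundedness of $\frac{df}{f}$; the estimate above, however, uses only that $\kappa$ is basic.)

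The hard part is this last uniform lower bound. Since $D_{\mathcal{F}}$ need not have a spectral gap above $0$ on $\Gamma_{b}(\Sigma Q)^{\bot}$ — its spectrum can be all of $\mathbb{R}$, cf.\ the remark after Corollary \ref{minimalCaseDFeigenvSpectrumCor} — one cannot bound $f^{-2}\|D_{\mathcal{F}}\varphi\|^{2}$ below pointwise in $\varphi$; the compactness argument trades the missing gap for the ellipticity of $A+D_{\mathcal{F}}$ together with the identity $\ker D_{\mathcal{F}}=L^{2}(\Gamma_{b}(\Sigma Q))$, and the care it needs (operator domains, closedness of $D_{\mathcal{F}}$, an elliptic estimate adapted to a merely transversally elliptic $A$) is the real content. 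The other delicate point is the reduction step, where the scalings $\kappa_{f}=\kappa-\frac{df}{f}$, $\Omega_{f}=f\Omega$, the metric-independence of $\nabla^{\Sigma Q}$, and the spinor identification $\Sigma M_{g_{f}}\cong\Sigma M_{g}$ must all be checked to produce the clean normal form $\widetilde{D}_{M,f}=A+\tfrac1f D_{\mathcal{F}}+R_{f}$.
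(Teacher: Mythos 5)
Your proposal is correct, and for part (2) it takes a genuinely different and in fact more robust route than the paper; part (1) is essentially the same argument packaged more cleanly.

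For part (1), both you and the paper show that the operator restricted to $L^{2}(\Gamma_{b}(\Sigma Q))$ differs from a $D_{b}$-like operator by a zeroth-order term of operator norm $O(\max|f|)$, and then invoke a Weyl/min--max continuity of eigenvalues (the paper's Lemma~\ref{OperatorTheoryLemma}, your ``self-adjoint perturbation theory''); the only difference is that you first conjugate by $f^{1/2}$ to turn $D_{M,f}$ into the self-adjoint (w.r.t.\ the fixed $L^{2}(g)$ inner product) operator $\widetilde{D}_{M,f}=A+\frac1f D_{\mathcal{F}}+R_{f}$, which makes the cancellation of the $\frac{df}{f}\cdot_{Q}$ term explicit; the paper instead keeps $D_{M,f}$ and works in the $g_{f}$-inner product. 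Both are fine.

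For part (2), the paper proceeds by computing $L_{f}^{\ast}L_{f}=D_{\mathrm{tr}}^{2}+\frac{df}{f}\cdot_{Q}D_{\mathrm{tr}}+\frac1{f^{2}}D_{\mathcal{F}}^{2}$ (with $\ast$ the $g_{f}$-adjoint) and restricting to $D_{\mathcal{F}}$-eigenspaces, which relies on Corollary~\ref{minimalCaseDFeigenvSpectrumCor} (simultaneous diagonalizability of $D_{\mathcal{F}}$ and $L^{2}$), which in turn requires $\kappa=0$; the taut case is then handled by a conformal rescaling of the leafwise metric. Your argument instead uses the anticommutator identity of Lemma~\ref{CommutatorLemma}, Cauchy--Schwarz absorption, and a Rellich/compactness argument to show that the quantity $\beta(f)=\inf\{\|A\varphi\|^{2}+f^{-2}\|D_{\mathcal{F}}\varphi\|^{2}:\varphi\in\Gamma_{b}(\Sigma Q)^{\perp},\,\|\varphi\|=1\}\to\infty$, using ellipticity of $A+D_{\mathcal{F}}$, closedness of $D_{\mathcal{F}}$, and $\ker D_{\mathcal{F}}=L^{2}(\Gamma_{b}(\Sigma Q))$ from Proposition~\ref{DFSymmProp}. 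This sidesteps completely the need for $D_{\mathcal{F}}$ to have pure point spectrum, and therefore sidesteps the obstruction highlighted in Example~\ref{CarExample} and the remark after the theorem. As you note, this version of the argument appears to need only that $\kappa$ is basic rather than exact --- a genuine strengthening, and one that is worth saying out loud, since the paper explicitly records that its own method for (2) fails in the non-taut Carri\`ere example and conjectures (2) may fail for general Riemannian foliations. If you push this observation, you should double-check that $\ker\overline{D_{\mathcal{F}}}$ really equals the $L^{2}$-closure of basic sections (Proposition~\ref{DFSymmProp} states the smooth-section statement), and be explicit about which closure/domain of $D_{\mathcal{F}}$ you are using in the Rellich step; also the $n$-odd anticommutator analogue should be written out, since $A$ has the extra $\xi\cdot_{M}$ factor there, but these are checkable details rather than gaps.
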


\begin{proof}
(1a) Observe that $\xi _{f}=\frac{1}{f}\xi ,$ $\xi _{f}^{\ast }=f\xi ,$ $%
\kappa _{f}=\kappa -\frac{df}{f}$ and $\Omega _{f}=f\Omega $. For the case
where $n$ is even, from (\ref{D_mD_tr formula}),%
\begin{eqnarray*}
D_{M,f} &=&D_{Q,f}-\frac{1}{2}\xi _{f}\cdot _{M,f}\Omega _{f}\cdot
_{M,f}+\xi _{f}\cdot _{M,f}\nabla _{\xi _{f}}^{\Sigma Q} \\
&=&D_{Q,f}-\frac{f}{2}\xi _{f}\cdot _{M,f}\Omega \cdot _{M,f}+\frac{1}{f}\xi
_{f}\cdot _{M,f}\nabla _{\xi }^{\Sigma Q}.
\end{eqnarray*}%
Then for any basic spinor $\psi $,%
\begin{equation}
D_{M,f}\left( \psi \right) =D_{b,f}\psi -\frac{f}{2}\xi _{f}\cdot
_{M,f}\Omega \cdot _{M}\psi ,
\end{equation}
since $\kappa $ is basic. Thus,

\begin{eqnarray*}
\frac{\left\Vert \left( D_{M,f}-D_{b,f}\right) \psi \right\Vert _{L^{2}}}{%
\left\Vert \psi \right\Vert _{L^{2}}} &=&\frac{\left\Vert \frac{f}{2}\xi
_{f}\cdot _{M,f}\Omega \cdot _{M}\psi \right\Vert _{L^{2}}}{\left\Vert \psi
\right\Vert _{L^{2}}}\leq \frac{\left\Vert \frac{f}{2}\Omega \cdot _{M}\psi
\right\Vert _{L^{2}}}{\left\Vert \psi \right\Vert _{L^{2}}} \\
&\leq &\frac{\max \left\vert f\right\vert }{2}C,
\end{eqnarray*}%
where $C$ is the operator norm of $\left( \Omega \cdot _{M}\right) $. Thus 
\begin{equation*}
\frac{\left\Vert \left( D_{M,f}-D_{b,f}\right) \psi \right\Vert _{L^{2}}}{%
\left\Vert \psi \right\Vert _{L^{2}}}\rightarrow 0
\end{equation*}%
uniformly in $f$ and $\psi $, hence 
\begin{equation*}
\left\Vert \left( D_{M,f}-D_{b,f}\right) \right\Vert _{Op}\leq \frac{\max
\left\vert f\right\vert }{2}C\rightarrow 0
\end{equation*}%
as $f\rightarrow 0$ uniformly. Since the eigenvalues of $D_{b,f}$ are
constant in $f$ and are those of $D_{b}$(see \cite{HabRic}), the eigenvalues
of $D_{M,f}$ converge to those of $D_{b}$, because the spectrum is
continuous as a function of the operator norm (see Lemma \ref%
{OperatorTheoryLemma} in the appendix).

(1b) For the case where $n$ is odd, from (\ref{D_mD_tr formula}),%
\begin{eqnarray*}
D_{M,f} &=&\xi \cdot _{M,f}\left( D_{Q,f}\oplus \left( -D_{Q,f}\right)
\right) -\frac{1}{2}\xi _{f}\cdot _{M,f}\Omega _{f}\cdot _{M,f}+\xi
_{f}\cdot _{M,f}\nabla _{\xi _{f}}^{\Sigma Q\oplus \Sigma Q} \\
&=&\xi \cdot _{M,f}\left( D_{Q,f}\oplus \left( -D_{Q,f}\right) \right) -%
\frac{f}{2}\xi _{f}\cdot _{M,f}\Omega \cdot _{M,f}+\frac{1}{f}\xi _{f}\cdot
_{M,f}\nabla _{\xi }^{\Sigma Q\oplus \Sigma Q}.
\end{eqnarray*}%
Then, since $\kappa $ is basic, for any basic spinor $\psi =\left( \psi
_{1},\psi _{2}\right) \in \Gamma _{b}\left( \Sigma Q\oplus \Sigma Q\right) $,%
\begin{eqnarray*}
D_{M,f}\left( \psi \right) &=&\xi \cdot _{M,f}\left( D_{Q,f}\psi _{1}\oplus
\left( -D_{Q,f}\psi _{2}\right) \right) -\frac{f}{2}\xi _{f}\cdot
_{M,f}\Omega \cdot _{M,f}\left( \psi _{1},\psi _{2}\right) +\frac{1}{f}\xi
_{f}\cdot _{M,f}\nabla _{\xi }^{\Sigma Q}\left( \psi _{1},\psi _{2}\right) \\
&=&\xi \cdot _{M,f}\left( D_{b,f}\psi _{1},-D_{b,f}\psi _{2}\right) -\frac{f%
}{2}\xi _{f}\cdot _{M,f}\Omega \cdot _{M}\left( \psi _{1},\psi _{2}\right) .
\end{eqnarray*}

Thus,

\begin{eqnarray*}
\frac{\left\Vert D_{M,f}\left( \psi _{1},\psi _{2}\right) -\xi \cdot
_{M,f}\left( D_{b,f}\psi _{1},-D_{b,f}\psi _{2}\right) \right\Vert _{L^{2}}}{%
\left\Vert \psi \right\Vert _{L^{2}}} &=&\frac{\left\Vert \frac{f}{2}\xi
_{f}\cdot _{M,f}\Omega \cdot _{M}\psi \right\Vert _{L^{2}}}{\left\Vert \psi
\right\Vert _{L^{2}}}\leq \frac{\left\Vert \frac{f}{2}\Omega \cdot _{M}\psi
\right\Vert _{L^{2}}}{\left\Vert \psi \right\Vert _{L^{2}}} \\
&\leq &\frac{\max \left\vert f\right\vert }{2}C,
\end{eqnarray*}%
where $C$ is the operator norm of $\left( \Omega \cdot _{M}\right) $. The
same conclusions follow.

($2^{\prime }$) Now we suppose the particular case that $\kappa =0$. Then $%
\kappa _{f}=-\frac{df}{f}$. For the case where $n$ is even, 
\begin{eqnarray*}
D_{M,f} &=&D_{Q,f}-\frac{f}{2}\xi \cdot _{M}\Omega \cdot _{M}+\frac{1}{f}\xi
\cdot _{M}\nabla _{\xi }^{\Sigma Q} \\
&=&\sum_{i=1}^{n}e_{i}\cdot _{Q}\nabla _{e_{i}}^{\Sigma Q}+\frac{1}{2}\left( 
\frac{df}{f}\right) \cdot _{Q}-\frac{f}{2}\xi \cdot _{M}\Omega \cdot _{M}+%
\frac{1}{f}D_{\mathcal{F}}.
\end{eqnarray*}

We consider the elliptic operator $L_{f}=D_{\func{tr}}+\frac{1}{f}D_{%
\mathcal{F}}=D_{Q}+\frac{1}{f}D_{\mathcal{F}}$, which is self-adjoint with
respect to the original metric and therefore has discrete real spectrum.
Then if $\ast $ is used as the adjoint with respect to the $L^{2}\left(
M,g_{f}\right) $ metric,

\begin{eqnarray*}
L_{f}^{\ast }L_{f} &=&\left( D_{\func{tr}}+\frac{1}{f}D_{\mathcal{F}}\right)
^{\ast }\left( D_{\func{tr}}+\frac{1}{f}D_{\mathcal{F}}\right)  \\
&=&\left( D_{\func{tr}}^{\ast }+\frac{1}{f}D_{\mathcal{F}}^{\ast }\right)
\left( D_{\func{tr}}+\frac{1}{f}D_{\mathcal{F}}\right)  \\
&=&\left( D_{\func{tr}}+\frac{df}{f}\cdot _{Q}+\frac{1}{f}D_{\mathcal{F}%
}\right) \left( D_{\func{tr}}+\frac{1}{f}D_{\mathcal{F}}\right)  \\
&=&D_{\func{tr}}^{2}+\frac{df}{f}\cdot _{Q}D_{\func{tr}}+\frac{1}{f}D_{%
\mathcal{F}}D_{\func{tr}}+D_{\func{tr}}\circ \frac{1}{f}D_{\mathcal{F}}+%
\frac{df}{f^{2}}\cdot _{Q}D_{\mathcal{F}}+\frac{1}{f^{2}}D_{\mathcal{F}}^{2}
\\
&=&D_{\func{tr}}^{2}+\frac{df}{f}\cdot _{Q}D_{\func{tr}}+\frac{1}{f}D_{%
\mathcal{F}}D_{\func{tr}}-\frac{df}{f^{2}}\cdot _{Q}D_{\mathcal{F}}+\frac{1}{%
f}D_{\func{tr}}D_{\mathcal{F}}+\frac{df}{f^{2}}\cdot _{Q}D_{\mathcal{F}}+%
\frac{1}{f^{2}}D_{\mathcal{F}}^{2} \\
&=&D_{\func{tr}}^{2}+\frac{df}{f}\cdot _{Q}D_{\func{tr}}+\frac{1}{f^{2}}D_{%
\mathcal{F}}^{2}
\end{eqnarray*}%
where $D_{\func{tr}}=\sum_{i=1}^{n}e_{i}\cdot _{Q}\nabla _{e_{i}}^{\Sigma Q}$%
, which is self-adjoint with respect to the original metric. Clearly $%
L_{f}^{\ast }L_{f}$ is nonnegative, elliptic, and self-adjoint with respect
to the new metric and thus has discrete spectrum. The operator$\ D_{\mathcal{%
F}}$ restricts to the eigenspaces of $L_{f}^{\ast }L_{f}$ since they
commute. Indeed, $D_{\mathcal{F}}$ anticommutes with $D_{\func{tr}}$ and
with $\frac{df}{f^{2}}\cdot _{Q}$ and commutes with $\frac{1}{f}$. By
Corollary \ref{minimalCaseDFeigenvSpectrumCor}, we may restrict to an
eigenspace of $D_{\mathcal{F}}$ corresponding to an eigenvalue $\alpha \neq 0
$ (since we are only considering antibasic sections now), and we see that
such an eigenvalue, normalized antibasic eigensection pair $\lambda
_{f},\psi _{f}$ satisfies%
\begin{eqnarray*}
\left\langle L_{f}^{\ast }L_{f}\psi _{f},\psi _{f}\right\rangle _{f}
&=&\left\langle \left( D_{\func{tr}}^{2}+\frac{df}{f}\cdot _{Q}D_{\func{tr}}+%
\frac{1}{f^{2}}D_{\mathcal{F}}^{2}\right) \psi _{f},\psi _{f}\right\rangle
_{f} \\
&=&\left\langle \left( D_{\func{tr}}^{2}+\frac{df}{f}\cdot _{Q}D_{\func{tr}}+%
\frac{1}{f^{2}}\alpha ^{2}\right) \psi _{f},\psi _{f}\right\rangle _{f} \\
&=&\left\langle \left( \frac{1}{f}D_{\func{tr}}\left( fD_{\func{tr}}\right) +%
\frac{1}{f^{2}}\alpha ^{2}\right) \psi _{f},\psi _{f}\right\rangle _{f} \\
&=&\left\langle D_{\func{tr}}\left( fD_{\func{tr}}\right) \psi _{f},\psi
_{f}\right\rangle +\left\langle \frac{1}{f^{2}}\alpha ^{2}\psi _{f},\psi
_{f}\right\rangle _{f} \\
&=&\left\langle D_{\func{tr}}\psi _{f},D_{\func{tr}}\psi _{f}\right\rangle
_{f}+\alpha ^{2}\left\langle \frac{1}{f^{2}}\psi _{f},\psi _{f}\right\rangle
_{f} \\
&\geq &\frac{\alpha ^{2}}{\max \left( f^{2}\right) }\rightarrow \infty 
\end{eqnarray*}%
as $f\rightarrow 0$ uniformly. Thus, the eigenvalues of $L_{f}^{\ast }L_{f}$
go to $+\infty $ as $f\rightarrow 0$ uniformly. Since the eigenvalues of $%
L_{f}^{\ast }L_{f}$ are precisely the squares of the eigenvalues of $L_{f}$,
we also get that the eigenvalues of $L_{j}$ approach $\pm \infty $ as $%
f\rightarrow 0$ uniformly. Next, observe that 
\begin{equation*}
\left\Vert D_{M,f}-L_{f}\right\Vert _{Op}=\left\Vert \frac{1}{2}\left( \frac{%
df}{f}\right) \cdot _{Q}-\frac{f}{2}\xi \cdot _{M}\Omega \cdot
_{M}\right\Vert _{Op}\leq \frac{1}{2}\max \left\vert f\right\vert \max
\left\Vert \Omega \right\Vert +\frac{1}{2}\max \left\vert \frac{df}{f}%
\right\vert \text{,}
\end{equation*}%
and the right hand side remains bounded as $f\rightarrow 0$ uniformly with $%
\frac{\left\vert df\right\vert }{f}$ bounded. Thus, since the spectrum is
continuous as a function of the operator norm (see Lemma \ref%
{OperatorTheoryLemma}), the eigenvalues of $D_{M,f}$ go to $\pm \infty $ as $%
f\rightarrow 0$ uniformly with $\frac{\left\vert df\right\vert }{f}$
bounded. The $n$ odd case is similar.

(2) Now, suppose that $\kappa $ is an exact form, so that $\kappa =dh$ for
some function $h$ (which must be basic; otherwise $\kappa $ would have a $%
\xi ^{\ast }$ component). Then we may multiply the leafwise metric by $%
\widetilde{f}^{2}$ where $\widetilde{f}=\exp \left( h\right) $, and then in
the new metric $\widetilde{\kappa }=0$. Then, given any positive function $f$%
, $g_{f}=f^{2}\xi ^{\ast }\otimes \xi ^{\ast }+g_{\xi ^{\bot }}=\left( f%
\widetilde{f}^{-1}\right) ^{2}\widetilde{f}^{2}\xi ^{\ast }\otimes \xi
^{\ast }+g_{\xi ^{\bot }}$. Suppose that $f\rightarrow 0$ uniformly with $%
\frac{df}{f}$ uniformly bounded; then $f\widetilde{f}^{-1}\rightarrow 0$
uniformly and $\frac{d\left( f\widetilde{f}^{-1}\right) }{f\widetilde{f}^{-1}%
}=\frac{d\left( f\right) }{f}-\frac{d\left( \widetilde{f}\right) }{%
\widetilde{f}}$ is also uniformly bounded. By the result in ($2^{\prime }$)
above, the nonzero eigenvalues in $\left\{ \mu _{k}\left( f\right) \right\} $
approach $\pm \infty $.
\end{proof}

\begin{remark}
Example \ref{CarExample} shows that in the case that $\mathcal{F}$ is not
taut, the methods of the proof for part (2) do not work. In this example,
the only eigenvalue of $D_{\mathcal{F}}$ is $0$, corresponding to the basic
sections, and yet the spectrum of $D_{\mathcal{F}}$ is $\mathbb{R}$. So the
conclusion of Corollary \ref{minimalCaseDFeigenvSpectrumCor} does not hold
even though $\kappa $ is basic. We conjecture that the conclusion (2)\ is
false for general Riemannian foliations.
\end{remark}

\section{Examples\label{examplesSection}}

\begin{example}
\label{T2Example}Consider $M=T^{2}=\mathbb{R}^{2}\diagup \left( 2\pi \mathbb{%
Z}\right) ^{2}$, the Euclidean two-dimensional torus, with a constant linear
flow $\xi =a\partial _{x}+b\partial _{y}$, where $a^{2}+b^{2}=1$. The spinor
bundle $\Sigma M$ is $\mathbb{C}^{2}\times M$, and we consider the Clifford
multiplication $\left( c\partial _{x}+d\partial _{y}\right) =\left( 
\begin{array}{cc}
0 & -c+di \\ 
c+di & 0%
\end{array}%
\right) $. The bundle $Q=\xi ^{\bot }=span\left\{ -b\partial _{x}+a\partial
_{y}\right\} $, and $\Sigma Q=\mathbb{C}\times M$. Covariant derivatives are
the same as directional derivatives. The standard metric is $g=dx^{2}+dy^{2}$%
, and we consider the perturbed metric 
\begin{equation*}
g_{f}=g_{t}=dx^{2}+dy^{2}+(t^{2}-1)\left( \xi ^{\ast }\right)
^{2}=dx^{2}+dy^{2}+(t^{2}-1)\left( a~dx+b~dy\right) ^{2}
\end{equation*}%
with $f\left( t\right) =t$. Since the foliation for this and the original
metric is totally geodesic, $\nabla ^{\Sigma M}=\nabla ^{\Sigma Q\oplus
\Sigma Q}$. Then 
\begin{eqnarray*}
D_{M,t} &=&\sum e_{j}\cdot _{M}\nabla _{e_{j}}^{\Sigma M}+\xi _{t}\cdot
_{t}\nabla _{\xi _{t}}^{\Sigma M} \\
&=&\sum e_{j}\cdot _{M}\nabla _{e_{j}}^{\Sigma M}+\frac{1}{t}\xi \cdot
_{M}\nabla _{\xi }^{\Sigma M} \\
&=&D_{M}+\left( \frac{1}{t}-1\right) \xi \cdot _{M}\nabla _{\xi }^{M}.
\end{eqnarray*}%
We now compute the eigenvalues of $D_{M,t}$. Observe that 
\begin{equation*}
\xi \cdot _{M}\nabla _{\xi }^{M}=\left( 
\begin{array}{cc}
0 & -a+bi \\ 
a+bi & 0%
\end{array}%
\right) \left( a\partial _{x}+b\partial _{y}\right) .
\end{equation*}%
Consider the space $V_{m,n}=\left\{ \left( 
\begin{array}{c}
c \\ 
d%
\end{array}%
\right) \exp \left( i\left( mx+ny\right) \right) :c,d\in \mathbb{C}\right\} $
, so that the Hilbert sum$\bigoplus\limits_{m,n\in \mathbb{Z}%
}V_{m,n}=L^{2}\left( \Sigma M\right) $.
We see that%
\begin{eqnarray*}
&&D_{M,t}\left( \left( 
\begin{array}{c}
c \\ 
d%
\end{array}%
\right) \exp \left( i\left( mx+ny\right) \right) \right) \\
&=&\left( \left( 
\begin{array}{cc}
0 & -\partial _{x}+i\partial _{y} \\ 
\partial _{x}+i\partial _{y} & 0%
\end{array}%
\right) +\left( \frac{1}{t}-1\right) \left( 
\begin{array}{cc}
0 & -a+bi \\ 
a+bi & 0%
\end{array}%
\right) \left( a\partial _{x}+b\partial _{y}\right) \right) \left( \left( 
\begin{array}{c}
c \\ 
d%
\end{array}%
\right) \exp \left( i\left( mx+ny\right) \right) \right) \\
&=&\left( \left( 
\begin{array}{cc}
0 & -im-n \\ 
im-n & 0%
\end{array}%
\right) +\left( \frac{1}{t}-1\right) \left( 
\begin{array}{cc}
0 & -a+bi \\ 
a+bi & 0%
\end{array}%
\right) \left( iam+ibn\right) \right) \left( \left( 
\begin{array}{c}
c \\ 
d%
\end{array}%
\right) \exp \left( i\left( mx+ny\right) \right) \right) .
\end{eqnarray*}%
The matrix is%
\begin{equation*}
\left( 
\begin{array}{cc}
0 & -im-n+\frac{1}{t}\left( -t+1\right) \left( -a+ib\right) \left(
iam+ibn\right) \\ 
im-n+\frac{1}{t}\left( -t+1\right) \left( a+ib\right) \left( iam+ibn\right)
& 0%
\end{array}%
\right) \allowbreak .
\end{equation*}
The eigenvalues are $\pm \sqrt{q}$, where%
\begin{equation*}
q=m^{2}+n^{2}-\left( am+bn\right) ^{2}+\frac{1}{t^{2}}\left( am+bn\right)
^{2}.
\end{equation*}%
\newline
So, in the case where $\frac{b}{a}$ is rational, the set of basic sections
of $\Sigma M$ is 
\begin{equation*}
\left\{ \left( 
\begin{array}{c}
c \\ 
d%
\end{array}%
\right) \exp \left( i\left( mx+ny\right) \right) :c,d\in \mathbb{C},m,n\in 
\mathbb{Z},am+bn=0\right\} .
\end{equation*}
Also, $\Sigma Q=\mathbb{C}$, and the basic Dirac operator is $D_{b}=i\theta $%
, where $\theta \bot \xi $. It has eigenvalues 
\begin{equation*}
\left\{ m\sqrt{1+\frac{b^{2}}{a^{2}}}:m\in \mathbb{Z}\right\}
\end{equation*}
with eigensections of the form $\left\{ \exp \left( i\left( mx+ny\right)
\right) :m,n\in \mathbb{Z},am+bn=0\right\} $. Actually, $M\diagup \mathcal{F}
$ is a circle of radius $\frac{2\pi }{\sqrt{1+\frac{b^{2}}{a^{2}}}}$. As can
be seen above, the eigenvalues $D_{M,t}$ are 
\begin{equation*}
\pm \sqrt{m^{2}+n^{2}-\left( am+bn\right) ^{2}+\frac{1}{t^{2}}\left(
am+bn\right) ^{2}}
\end{equation*}
with $m,n\in \mathbb{Z}$. The eigenvalues with $am+bn=0$ are independent of $%
t$ and trivially converge to the eigenvalues of $D_{b}\oplus $ $-D_{b}$. All
other eigenvalues go to $\pm \infty $ as $t\rightarrow 0$.

On the other hand, if $\frac{b}{a}$ is irrational, the basic sections of $%
\Sigma M$ are $\left\{ \left( 
\begin{array}{c}
c \\ 
d%
\end{array}%
\right) :c,d\in \mathbb{C}\right\} $, since each leaf is dense. The basic
Dirac operator is the zero operator and only has the eigenvalue $0$. Also,
since $am+bn\neq 0$ for all $\left( m,n\right) \in \mathbb{Z}^{2}\setminus
\left\{ \left( 0,0\right) \right\} $, the expression above implies that
every eigenvalue besides $0$ goes to $\pm \infty $ as $t\rightarrow 0$.

These results are consistent with our theorem. We also find the spectrum of
the operator 
\begin{equation*}
\xi \cdot _{M}\nabla _{\xi }^{M}=\left( 
\begin{array}{cc}
0 & -a+bi \\ 
a+bi & 0%
\end{array}%
\right) \left( a\partial _{x}+b\partial _{y}\right) .
\end{equation*}%
Applied to an element of $\left\{ \left( 
\begin{array}{c}
c \\ 
d%
\end{array}%
\right) \exp \left( i\left( mx+ny\right) \right) :c,d\in \mathbb{C},m,n\in 
\mathbb{Z},am+bn=0\right\} $, we get%
\begin{equation*}
\left( \xi \cdot _{M}\nabla _{\xi }^{M}\right) \left( 
\begin{array}{c}
c \\ 
d%
\end{array}%
\right) \exp \left( i\left( mx+ny\right) \right) =\left( iam+ibn\right)
\left( 
\begin{array}{cc}
0 & -a+bi \\ 
a+bi & 0%
\end{array}%
\right) \left( 
\begin{array}{c}
c \\ 
d%
\end{array}%
\right) \exp \left( i\left( mx+ny\right) \right) ,
\end{equation*}%
and the matrix restricted to this subspace is%
\begin{equation*}
\left( iam+ibn\right) \left( 
\begin{array}{cc}
0 & -a+bi \\ 
a+bi & 0%
\end{array}%
\right) =\left( 
\begin{array}{cc}
0 & \left( -a+ib\right) \left( iam+ibn\right) \\ 
\left( a+ib\right) \left( iam+ibn\right) & 0%
\end{array}%
\right) \allowbreak .
\end{equation*}%
The eigenvalues are obviously $\pm \left( am+bn\right) $, so that in the
irrational slope case $0$ is a limit point of the eigenvalues of $D_{%
\mathcal{F}}$. In fact, the eigenvalues are dense in $\mathbb{R}$. Note that
the whole spectrum is $\mathbb{R}$ because it is closed, even though there
exists an orthonormal basis of $L^{2}\left( \Sigma M\right) $ consisting of
eigensections. The problem is that $\left( D_{\mathcal{F}}-\lambda I\right)
^{-1}$ for any $\lambda $ not in the spectrum, but this operator is not a
bounded operator.
\end{example}

\begin{example}
Consider $M=T^{3}=\mathbb{R}^{3}\diagup \left( 2\pi \mathbb{Z}\right) ^{3}$,
the Euclidean $3$-torus, with a constant linear flow $\xi =a\partial
_{x}+b\partial _{y}+c\partial _{z}$, where $a^{2}+b^{2}+c^{2}=1$. The spinor
bundle $\Sigma M$ is $\mathbb{C}^{2}\times M$, and we consider the Clifford
multiplication $\left( c\partial _{x}+d\partial _{y}+e\partial _{z}\right)
=\left( 
\begin{array}{cc}
ie & -c+di \\ 
c+di & -ie%
\end{array}%
\right) $. The bundle $Q=\xi ^{\bot }=span\left\{ -b\partial _{x}+a\partial
_{y},-c\partial _{x}+a\partial _{z}\right\} $, and $\Sigma Q=\mathbb{C}%
^{2}\times M$. Covariant derivatives are the same as directional
derivatives. The standard metric is $g=dx^{2}+dy^{2}+dz^{2}$, and we
consider the perturbed metric 
\begin{equation*}
g_{f}=g_{t}=g+(t^{2}-1)\left( \xi ^{\ast }\right)
^{2}=dx^{2}+dy^{2}+(t^{2}-1)\left( a~dx+b~dy+cdz\right) ^{2}
\end{equation*}%
with $f\left( t\right) =t$. Since the foliation for this and the original
metric is totally geodesic, $\nabla ^{\Sigma M}=\nabla ^{\Sigma Q}$. Then 
\begin{eqnarray*}
D_{M,t} &=&\sum e_{j}\cdot _{M}\nabla _{e_{j}}^{\Sigma M}+\xi _{t}\cdot
_{t}\nabla _{\xi _{t}}^{\Sigma M} \\
&=&\sum e_{j}\cdot _{M}\nabla _{e_{j}}^{\Sigma M}+\frac{1}{t}\xi \cdot
_{M}\nabla _{\xi }^{\Sigma M} \\
&=&D_{M}+\left( \frac{1}{t}-1\right) \xi \cdot _{M}\nabla _{\xi }^{M}.
\end{eqnarray*}%
We now compute the eigenvalues of $D_{M,t}$. Observe that 
\begin{equation*}
\xi \cdot _{M}\nabla _{\xi }^{M}=\left( 
\begin{array}{cc}
ic & -a+bi \\ 
a+bi & -ic%
\end{array}%
\right) \left( a\partial _{x}+b\partial _{y}+c\partial _{z}\right) .
\end{equation*}%
Consider the space $V_{m,n,k}=\left\{ \left( 
\begin{array}{c}
r \\ 
s%
\end{array}%
\right) \exp \left( i\left( mx+ny+kz\right) \right) :r,s\in \mathbb{C}%
\right\} $ , so that the Hilbert sum$\bigoplus\limits_{m,n,k\in \mathbb{Z}%
}V_{m,n,k}=L^{2}\left( \Sigma M\right) $.

We see that for $\varphi =\left( 
\begin{array}{c}
r \\ 
s%
\end{array}%
\right) \exp \left( i\left( mx+ny+kz\right) \right) $,%
\begin{eqnarray*}
&&D_{M,t}\varphi  \\
&=&\left( \left( 
\begin{array}{cc}
i\partial _{z} & -\partial _{x}+i\partial _{y} \\ 
\partial _{x}+i\partial _{y} & -i\partial _{z}%
\end{array}%
\right) +\left( \frac{1}{t}-1\right) \left( 
\begin{array}{cc}
ic & -a+bi \\ 
a+bi & -ic%
\end{array}%
\right) \left( a\partial _{x}+b\partial _{y}+c\partial _{z}\right) \right)
\varphi  \\
&=&\left( \left( 
\begin{array}{cc}
-k & -im-n \\ 
im-n & k%
\end{array}%
\right) +\left( \frac{1}{t}-1\right) \left( 
\begin{array}{cc}
ic & -a+bi \\ 
a+bi & -ic%
\end{array}%
\right) \left( iam+ibn+ick\right) \right) \varphi .
\end{eqnarray*}%
One can check that the eigenvalues of $D_{M,t}$ restricted to such sections
are 
\begin{equation*}
\pm \sqrt{k^{2}+n^{2}+m^{2}+\frac{\left( 1-t^{2}\right) }{t^{2}}\left(
am+bn+ck\right) ^{2}}.
\end{equation*}%
As $t\rightarrow 0^{+}$, then $\lambda \approx \pm \frac{1}{t}\left\vert
am+bn+ck\right\vert $ if $am+bn+ck\neq 0$, and $\lambda =\pm \sqrt{%
k^{2}+n^{2}+m^{2}}$ otherwise. So as $t\rightarrow 0^{+}$, if $am+bn+ck=0$
(i.e. basic eigensections of $D_{M}$), then the eigenvalues are $\pm \sqrt{%
k^{2}+n^{2}+m^{2}}$ and do not change with $t$. Otherwise, if $am+bn+ck\neq 0
$, then all the eigenvalues go to $\pm \infty $. This is consistent with our
theorem.
\end{example}

\begin{example}
\label{CarExample}Consider the Carri\`{e}re example from \cite{Car} in the $%
3 $-dimensional case. This foliation is not taut, and we will show that the
spectrum of $D_{\mathcal{F}}$ is all of $\mathbb{R}$ in this case, and its
only eigenvalue is $0$, corresponding to the basic sections. Choose $%
A=\left( 
\begin{array}{cc}
2 & 1 \\ 
1 & 1%
\end{array}%
\right) $ to be a symmetric matrix in $\mathrm{SL}_{2}(\mathbb{Z})$, and let 
$\mathbb{T}^{2}=\mathbb{R}^{2}\diagup \mathbb{Z}^{2}$. Note that the
eigenvalues of $A$ are $\lambda =\frac{3+\sqrt{5}}{2},\frac{1}{\lambda }=%
\frac{3-\sqrt{5}}{2}$ corresponding to normalized eigenvectors 
\begin{eqnarray*}
V_{1}\allowbreak &=&\left( 
\begin{array}{c}
\frac{\frac{1}{2}\sqrt{5}+\frac{1}{2}}{\sqrt{\frac{1}{2}\sqrt{5}+\frac{5}{2}}%
} \\ 
\frac{1}{\sqrt{\frac{1}{2}\sqrt{5}+\frac{5}{2}}}%
\end{array}%
\right) =\allowbreak \left( 
\begin{array}{c}
0.850\,65 \\ 
0.525\,73%
\end{array}%
\right) \allowbreak =:G\partial _{x}+K\partial _{y}, \\
V_{2} &=&\left( 
\begin{array}{c}
\frac{-\frac{1}{2}\sqrt{5}+\frac{1}{2}}{\sqrt{-\frac{1}{2}\sqrt{5}+\frac{5}{2%
}}} \\ 
\frac{1}{\sqrt{-\frac{1}{2}\sqrt{5}+\frac{5}{2}}}%
\end{array}%
\right) =\allowbreak \left( 
\begin{array}{c}
-0.525\,73 \\ 
0.850\,65%
\end{array}%
\right) \allowbreak =-K\partial _{x}+G\partial _{y},
\end{eqnarray*}%
respectively. Let the hyperbolic torus $M=\mathbb{T}_{A}^{3}$ be the
quotient of $\mathbb{T}^{2}\times \mathbb{R}$ by the equivalence relation
which identifies $(m,t)$ to $(A(m),t+1)$. We may also think of it as $%
\mathbb{T}^{2}\times \left[ 0,1\right] $ with $\left( m,0\right) $
identified with $\left( Am,1\right) $.

We choose the bundle-like metric so that the vectors $V_{1},V_{2},$ $%
\partial _{t}$ form an orthonormal basis at $t=0$ and in general $\lambda
^{t}V_{1}$,$\lambda ^{-t}V_{2},\partial _{t}$ form an orthonormal basis for $%
t\in \left[ 0,1\right] $. Note that at $t=0$, this is the standard flat
metric on the torus. If we use $^{\ast }$ to denote the adjoint/dual with
respect to the $t=0$ metric, the metric is%
\begin{equation*}
g=dt^{2}+\lambda ^{-2t}\left( V_{1}^{\ast }\right) ^{2}+\lambda ^{2t}\left(
V_{2}^{\ast }\right) ^{2}.
\end{equation*}%
We have that the mean curvature of the flow is $\kappa =\kappa _{b}=-\log
\left( \lambda \right) dt$, since $\chi _{\mathcal{F}}=\lambda
^{t}V_{2}^{\ast }$ is the characteristic form and $d\chi _{\mathcal{F}}=\log
\left( \lambda \right) \lambda ^{t}dt\wedge V_{2}^{\ast }=-\kappa \wedge
\chi _{\mathcal{F}}$. We also have that $\varphi _{0}=0$ for this flow. 
\newline
We choose the trivial spin structure, so that the spin bundle is $M\times 
\mathbb{C}^{2}$ with spinor connection, with Clifford multiplication 
\begin{equation*}
c\left( \lambda ^{-t}V_{2}\right) =\left( 
\begin{array}{cc}
i & 0 \\ 
0 & -i%
\end{array}%
\right) ,c\left( \lambda ^{t}V_{1}\right) =\left( 
\begin{array}{cc}
0 & -1 \\ 
1 & 0%
\end{array}%
\right) ,c\left( \partial _{t}\right) =\left( 
\begin{array}{cc}
0 & i \\ 
i & 0%
\end{array}%
\right) .
\end{equation*}%
We need to calculate the covariant derivatives of spinors. We calculate for $%
\xi =e_{0}=\lambda ^{-t}V_{2}$, $e_{1}=\lambda ^{t}V_{1}$, $e_{2}=\partial
_{t}$. 
\begin{eqnarray*}
\left[ \lambda ^{-t}V_{2},\lambda ^{t}V_{1}\right]  &=&\left[ e_{0},e_{1}%
\right] =0 ,\\
\left[ \lambda ^{-t}V_{2},\partial _{t}\right]  &=&\left[ e_{0},e_{2}\right]
=+\left( \log \lambda \right) \lambda ^{-t}V_{2}=\left( \log \lambda \right)
e_{0}, \\
\left[ \lambda ^{t}V_{1},\partial _{t}\right]  &=&\left[ e_{1},e_{2}\right]
=-\left( \log \lambda \right) \lambda ^{t}V_{1}=-\left( \log \lambda \right)
e_{1}.
\end{eqnarray*}%
Then by the Koszul formula, the Christoffel symbols are%
\begin{eqnarray*}
\Gamma _{00}^{2} &=&\left\langle \nabla _{e_{0}}e_{0},e_{2}\right\rangle =%
\frac{1}{2}\left( -\left\langle \left[ e_{0},e_{2}\right] ,e_{0}\right%
\rangle -\left\langle \left[ e_{0},e_{2}\right] ,e_{0}\right\rangle \right)
=-\log \lambda , \\
\Gamma _{12}^{1} &=&-\Gamma _{11}^{2}=-\Gamma _{02}^{0}=-\log \lambda 
\end{eqnarray*}%
similarly.
Now we use the formula 
\begin{equation*}
\nabla _{X}^{\Sigma M}\psi =X\left( \psi \right) +\frac{1}{2}%
\sum_{i<j}\left\langle \nabla _{X}^{M}e_{i},e_{j}\right\rangle e_{i}\cdot
_{M}e_{j}\cdot _{M}\psi .
\end{equation*}%
Then 
\begin{eqnarray*}
\nabla _{\lambda ^{-t}V_{2}}^{\Sigma M}\varphi  &=&\nabla _{e_{0}}^{\Sigma
M}\varphi =s^{-1}\lambda ^{-t}V_{2}\left( \varphi \right) +\frac{1}{2}%
\sum_{i<j}\left\langle \nabla _{e_{0}}^{M}e_{i},e_{j}\right\rangle
e_{i}\cdot _{M}e_{j}\cdot _{M}\varphi  \\
&=&s^{-1}\lambda ^{-t}V_{2}\left( \varphi \right) +\frac{\log \lambda }{2}%
\left( 
\begin{array}{cc}
0 & 1 \\ 
-1 & 0%
\end{array}%
\right) \varphi =s^{-1}\lambda ^{-t}V_{2}\left( \varphi \right) -\frac{\log
\lambda }{2}e_{1}\cdot _{M}\varphi , \\
\nabla _{\lambda ^{t}V_{1}}^{\Sigma M}\varphi  &=&\nabla _{e_{1}}^{\Sigma
M}\varphi =\lambda ^{t}V_{1}\left( \varphi \right) +\frac{1}{2}%
\sum_{i<j}\left\langle \nabla _{e_{1}}^{M}e_{i},e_{j}\right\rangle
e_{i}\cdot _{M}e_{j}\cdot _{M}\varphi  \\
&=&\lambda ^{t}V_{1}\left( \varphi \right) -\frac{\log \lambda }{2}\left( 
\begin{array}{cc}
i & 0 \\ 
0 & -i%
\end{array}%
\right) \varphi =\lambda ^{t}V_{1}\left( \varphi \right) -\frac{\log \lambda 
}{2}e_{0}\cdot _{M}\varphi , \\
\nabla _{\partial _{t}}^{\Sigma M}\varphi  &=&\partial _{t}\varphi .
\end{eqnarray*}%
With $\xi =\lambda ^{-t}V_{2}$, the connection satisfies%
\begin{eqnarray*}
\nabla _{\xi }^{\Sigma M}\varphi  &=&\lambda ^{-t}V_{2}\left( \varphi
\right) +\frac{\log \lambda }{2}\left( 
\begin{array}{cc}
0 & 1 \\ 
-1 & 0%
\end{array}%
\right) \varphi  \\
&=&\nabla _{\xi }^{\Sigma Q}\varphi +\frac{1}{2}\Omega \cdot _{M}\varphi +%
\frac{1}{2}\xi \cdot _{M}\kappa \cdot _{M}\varphi  \\
&=&\nabla _{\xi }^{\Sigma Q}\varphi -\frac{\log \lambda }{2}\left( 
\begin{array}{cc}
i & 0 \\ 
0 & -i%
\end{array}%
\right) \left( 
\begin{array}{cc}
0 & i \\ 
i & 0%
\end{array}%
\right) \varphi  \\
&=&\nabla _{\xi }^{\Sigma Q}\varphi +\frac{\log \lambda }{2}\left( 
\begin{array}{cc}
0 & 1 \\ 
-1 & 0%
\end{array}%
\right) \varphi ,
\end{eqnarray*}%
so%
\begin{equation*}
\nabla _{\xi }^{\Sigma Q}\varphi =s^{-1}\lambda ^{-t}V_{2}\varphi .
\end{equation*}%
Now we compute%
\begin{eqnarray*}
D_{\mathcal{F}}\varphi  &=&\xi \cdot _{M}\nabla _{\xi }^{\Sigma Q}\varphi  \\
&=&\left( 
\begin{array}{cc}
i & 0 \\ 
0 & -i%
\end{array}%
\right) \lambda ^{-t}V_{2}\varphi  \\
&=&\left( 
\begin{array}{cc}
i\lambda ^{-t}V_{2} & 0 \\ 
0 & -i\lambda ^{-t}V_{2}%
\end{array}%
\right) \varphi .
\end{eqnarray*}%
\newline
So to determine the spectrum of $D_{\mathcal{F}}$, we consider $D_{\mathcal{F%
}}-\mu I$ and determine when it has a bounded inverse. We apply this to a
section of the form%
\begin{equation*}
\varphi =\left( 
\begin{array}{c}
a_{bc} \\ 
f_{bc}%
\end{array}%
\right) e^{2\pi i\left( bx+cy\right) }.
\end{equation*}%
Then 
\begin{eqnarray*}
\left( D_{\mathcal{F}}-\mu I\right) \varphi  &=&\left( 
\begin{array}{cc}
i\lambda ^{-t}V_{2}-\mu  & 0 \\ 
0 & -i\lambda ^{-t}V_{2}-\mu 
\end{array}%
\right) \left( 
\begin{array}{c}
a_{bc} \\ 
f_{bc}%
\end{array}%
\right) e^{2\pi i\left( bx+cy\right) } \\
&=&\left( 
\begin{array}{cc}
i\lambda ^{-t}\left( 2\pi i\left( -Kb+Gc\right) \right) -\mu  & 0 \\ 
0 & -i\lambda ^{-t}\left( 2\pi i\left( -Kb+Gc\right) \right) -\mu 
\end{array}%
\right) \varphi  \\
&=&\left( 
\begin{array}{cc}
-\lambda ^{-t}2\pi \left( -Kb+Gc\right) -\mu  & 0 \\ 
0 & \lambda ^{-t}2\pi \left( -Kb+Gc\right) -\mu 
\end{array}%
\right) \varphi .
\end{eqnarray*}%
Suppose that $\mu $ is actually an eigenvalue of $D_{\mathcal{F}}$. Then $%
\varphi $ must satisfy the condition $\varphi \left( t+1,2x+y,x+y\right)
=\varphi \left( t,x,y\right) $, $\mu $ must be constant, and $\mu =\pm
\lambda ^{-t}2\pi \left( -Kb+Gc\right) $. So only $b=c=0$ is possible,
corresponding to the double eigenvalue $0$. The eigensections are exactly
the sections that depend on $t$ alone, the basic sections.\newline
What is in the other part of the spectrum of $D_{\mathcal{F}}$? We have%
\begin{equation*}
\left( D_{\mathcal{F}}-\mu I\right) ^{-1}\varphi =\left( 
\begin{array}{cc}
-p-\mu  & 0 \\ 
0 & p-\mu 
\end{array}%
\right) ^{-1}\varphi =\allowbreak \left( 
\begin{array}{cc}
-\frac{1}{p+\mu } & 0 \\ 
0 & \frac{1}{p-\mu }%
\end{array}%
\right) \varphi 
\end{equation*}%
acting on sections of the form $\varphi $, which exists as long as $p-\mu
\neq 0$ and $p+\mu \neq 0$, where $p=\lambda ^{-t}2\pi \left( -Kb+Gc\right)
=\lambda ^{-t}2\pi \allowbreak \left( -0.525\,73b+0.850\,65c\right) $ takes
on every number in the range%
\begin{equation*}
\lambda ^{-1}2\pi \allowbreak \allowbreak \left(
-0.525\,73b+0.850\,65c\right) \leq p\leq 2\pi \allowbreak \allowbreak \left(
-0.525\,73b+0.850\,65c\right) ;
\end{equation*}%
that is,%
\begin{equation*}
-1.\,\allowbreak 261\,7\allowbreak b+2.\,\allowbreak 041\,5c\leq p\leq
-3.\,\allowbreak 303\,3\allowbreak b+\allowbreak 5.\,\allowbreak 344\,8c.
\end{equation*}%
So $\mu $ is in the spectrum if and only if $\pm \mu $ is in the set where $%
-1.\,\allowbreak 261\,7\allowbreak b+2.\,\allowbreak 041\,5c\leq \mu \leq
-3.\,\allowbreak 303\,3\allowbreak b+\allowbreak 5.\,\allowbreak 344\,8c$
for any integers $b,c$. Thus, every $\mu \in \mathbb{R}$ is in the spectrum.
\end{example}

\section{Appendix}

We include the following well-known result for completeness, although it certainly is 
contained in more general perturbation theory of linear operators in the literature.
\begin{lemma}
\label{OperatorTheoryLemma}Let $A$ and $B$ be two unbounded, essentially
self-adjoint operators with discrete spectrum and the same domain on a
Hilbert space such that the eigenspaces according to each eigenvalue are
finite-dimensional and the eigenvalues approach $\infty $ in absolute value.
If $\left\Vert A-B\right\Vert _{op}\leq \varepsilon $ for some $\varepsilon
>0$ and%
\begin{equation*}
...\leq \lambda _{j-1}\leq \lambda _{j}\leq \lambda _{j+1}\leq ...
\end{equation*}%
with $j\in\mathbb{Z}$ are the eigenvalues of $A$, counted with multiplicities. Then there is a
numbering of the eigenvalues%
\begin{equation*}
...\leq \mu _{j-1}\leq \mu _{j}\leq \mu _{j+1}\leq ...
\end{equation*}%
of $B$ such that 
\begin{equation*}
\left\vert \lambda _{j}-\mu _{j}\right\vert \leq \varepsilon
\end{equation*}%
for all $j$.

\begin{proof}
First, we prove the result for the case of nonnegative operators. Let $A$
and $B$ be nonnegative, satisfy $\left\Vert A-B\right\Vert _{op}\leq
\varepsilon $, and have domain $\mathcal{D}$. For any subspace $S$ of $%
\mathcal{D}$,%
\begin{equation*}
\sup_{\substack{ \alpha \in S  \\ \left\Vert a\right\Vert =1}}\left\Vert
A\alpha \right\Vert \leq \sup_{\substack{ \alpha \in S  \\ \left\Vert
a\right\Vert =1}}\left\Vert \left( A-B\right) \alpha \right\Vert +\left\Vert
B\alpha \right\Vert \leq \varepsilon +\sup_{\substack{ \alpha \in S  \\ %
\left\Vert a\right\Vert =1}}\left\Vert B\alpha \right\Vert ,
\end{equation*}%
so in particular%
\begin{equation*}
\lambda _{k}=\inf_{\substack{ S\subset \mathcal{D}  \\ \dim S=k}}\left( \sup 
_{\substack{ \alpha \in S  \\ \left\Vert a\right\Vert =1}}\left\Vert A\alpha
\right\Vert \right) \leq \varepsilon +\inf_{\substack{ S\subset \mathcal{D} 
\\ \dim S=k}}\left( \sup_{\substack{ \alpha \in S  \\ \left\Vert
a\right\Vert =1}}\left\Vert B\alpha \right\Vert \right) =\varepsilon +\mu
_{k}.
\end{equation*}%
Reversing the roles of $A$ and $B$, we do obtain $\left\vert \lambda
_{k}-\mu _{k}\right\vert \leq \varepsilon $ for the nonnegative case.\newline
Next, for arbitrary operators $A$ and $B$ that satisfy the hypothesis,
consider the nonnegative operators $A^{\prime }=\left\vert A\right\vert +A$, 
$B^{\prime }=\left\vert B\right\vert +B$, so that $\left\Vert A^{\prime
}-B^{\prime }\right\Vert _{op}\leq 2\varepsilon $. The eigenvalues of $%
A^{\prime }$ and $B^{\prime }$ are $\left\vert \lambda _{k}\right\vert
+\lambda _{k}$ and $\left\vert \mu _{k}\right\vert +\mu _{k}$, respectively,
and the previous argument shows that $\left\vert \lambda _{k}-\mu
_{k}\right\vert \leq \varepsilon $ for all nonnegative eigenvalues $\lambda
_{k}$ and $\mu _{k}$ of $A$ and $B$. Similarly, we apply the previous
argument to $A^{\prime \prime }=\left\vert A\right\vert -A$ and $B^{\prime
\prime }=\left\vert B\right\vert -B$ to show that $\left\vert \lambda
_{k}-\mu _{k}\right\vert \leq \varepsilon $ for all negative eigenvalues $%
\lambda _{k}$ and $\mu _{k}$ of $A$ and $B$.
\end{proof}
\end{lemma}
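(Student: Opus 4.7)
The plan is to use the Courant--Fischer min-max principle for self-adjoint operators, reducing first to the nonnegative case. For a nonnegative, essentially self-adjoint operator $A$ with discrete spectrum $0\le\lambda_1\le\lambda_2\le\cdots\to\infty$ and common form domain $\mathcal{D}$, the classical variational characterization gives
\[
\lambda_k \;=\; \inf_{\substack{S\subset \mathcal{D}\\ \dim S=k}}\ \sup_{\substack{\alpha\in S\\ \|\alpha\|=1}} \langle A\alpha,\alpha\rangle.
\]
If $\|A-B\|_{op}\le\varepsilon$, then $|\langle A\alpha,\alpha\rangle - \langle B\alpha,\alpha\rangle|\le\varepsilon$ for every unit vector $\alpha$, so plugging the pointwise bound into the min-max formula immediately yields $\lambda_k\le\mu_k+\varepsilon$; reversing the roles of $A$ and $B$ gives $|\lambda_k-\mu_k|\le\varepsilon$. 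This settles the nonnegative case.

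For general $A,B$ whose spectra accumulate at both $\pm\infty$, I would reduce to the nonnegative case by introducing, via the spectral calculus, the nonnegative operators $A^{\pm}:=|A|\pm A$ and $B^{\pm}:=|B|\pm B$. The nonzero spectrum of $A^{+}$ consists precisely of $\{2\lambda_k:\lambda_k>0\}$, and that of $A^{-}$ of $\{-2\lambda_k:\lambda_k<0\}$, so a matching of eigenvalues of $A^{\pm}$ with those of $B^{\pm}$ induces a matching of the positive (resp.\ negative) eigenvalues of $A$ and $B$. Using the functional-calculus bound $\||A|-|B|\|_{op}\le\|A-B\|_{op}$ for self-adjoint operators together with the triangle inequality, one obtains $\|A^{\pm}-B^{\pm}\|_{op}\le2\varepsilon$, and the nonnegative case then gives $|2\lambda_k-2\mu_k|\le2\varepsilon$, i.e.\ $|\lambda_k-\mu_k|\le\varepsilon$, separately for positive and negative indices.

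The main obstacle is the bookkeeping needed to stitch together a single two-sided ordering indexed by $j\in\mathbb{Z}$ out of the two one-sided orderings coming from $A^{\pm}$ and $B^{\pm}$. In particular, the kernels of $A$ and $B$ need not have the same dimension, so one must argue that zero eigenvalues can be aligned in a way compatible with the $\varepsilon$-estimate; this is where one uses that $\|A-B\|_{op}\le\varepsilon$ forces the number of eigenvalues inside any interval to change only through continuous drift, so the positive-index list and negative-index list can be concatenated unambiguously up to an additive $\varepsilon$ shift. A secondary technical point is justifying the min-max formula for the unbounded operators $A^{\pm}$, which is standard once one restricts the test subspaces to the form domain of $|A|$ (equivalently, the domain of $A$ itself, since $A$ is essentially self-adjoint with discrete spectrum).
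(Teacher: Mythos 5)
Your proposal follows essentially the same route as the paper's proof: a min--max argument for the nonnegative case, followed by a reduction of the general case via the operators $|A|\pm A$ and $|B|\pm B$. The only cosmetic difference is that you use the Rayleigh quotient $\langle A\alpha,\alpha\rangle$ where the paper uses $\|A\alpha\|$; for nonnegative self-adjoint operators these produce the same $\lambda_k$.

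Two of the technical points you flag are worth emphasizing because they are genuine wrinkles in the paper's proof as well, not just in yours. First, you invoke $\||A|-|B|\|_{op}\le\|A-B\|_{op}$, which the paper also uses implicitly to get $\|A'-B'\|_{op}\le 2\varepsilon$; but the absolute-value function is not operator Lipschitz (a classical observation of Kato), so this inequality is false for general self-adjoint $A,B$ and needs a separate justification in the intended setting. Second, when $A$ has infinitely many nonpositive eigenvalues, $A'=|A|+A$ has an infinite-dimensional kernel, so the finite-multiplicity hypothesis under which the nonnegative min--max step was proved no longer holds; one must restrict attention to the orthogonal complement of the kernel (with an appropriately shifted index) before comparing $A'$ and $B'$. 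You correctly identify the indexing/kernel-alignment issue as the main remaining work; since the paper glosses over the same points, your proposal is on exactly the same footing as the published argument.
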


\end{document}